\newtheorem{definition}{Definition}
\newtheorem{property}{Property}
\newtheorem{theorem}{Theorem}
\newtheorem{lemma}{Lemma}
\begin{document}

\title{ \Large{\bf Properties of fractional integral operators involving the
three-parameters Mittag-Leffler function in the kernels with respect to another function}}


\author{ 
  {\bf {\large{D. S. Oliveira}}} \\
 {\small Coordination of Civil Engineering, UTFPR, } \\
{\small Guarapuava, PR, 85053-525, Brazil} \\
 {\small oliveiradaniela@utfpr.edu.br} \\ \\
  }

 \date{}

\maketitle

\thispagestyle{empty}
\vspace{-.4cm}


\noindent{\bf Abstract:} {This paper aims to investigate properties associated with fractional integral operators 
involving the three-parameters Mittag-Leffler function in the kernels with respect to another function. 
We prove that the Cauchy problem and the Volterra integral equation are equivalent. We find 
a closed-form to the solution of the Cauchy problem using successive approximations method 
and $\psi$-Caputo fractional derivative.}

\noindent{\bf Keywords:} {$\psi$-Riemann-Liouville fractional integral, $\psi$-Caputo fractional derivative,
three-parameters Mittag-Leffler function, general fractional integral operators \it }

\vspace*{.2cm}
\section{Introduction} 

In the last years, the number of integral and differential operators has increased a lot 
\cite{Almeida,Oliveira,Sales,Vanterler,Yang}. Some of these operators contain in its kernels 
the so-called special functions, for example: hypergeometric function \cite{Grinko}, Meijer 
G-function, Fox H-function \cite{Sri} and three-parameters Mittag-Leffler function.
In 1971, Prabhakar introduced in the kernel of the Riemann-Liouville fractional integral 
the three-parameters Mittag-Leffler function, \cite{Prabhakar}. 
In 2002, Kilbas et al. investigated an integrodifferential equation 
involving the Riemann-Liouville fractional derivative and the fractional integral operator 
developed by Prabhakar, \cite{Kilbas2002}. In 2004, the same authors, proved some properties 
associated with the generalized operator defined by Prabhakar, \cite{Kilbas2004}. Srivastava 
and Tomovski, in 2009, proposed the fractional integral operator, which contains in its kernel 
the generalized Mittag-Leffler function, \cite{Srivastava}. 
Garra et al., in 2014, defined the Hilfer-Prabhakar fractional derivative, this fractional 
derivative is a generalization of Hilfer derivative in which Riemann-Liouville fractional 
integral is replaced by Prabhakar fractional integral, \cite{Garra}. In 2016, Dorrego 
introduced the $k$-Mittag-Leffler function in the kernel of $k$-Riemann-Liouville fractional 
integral, \cite{Dorrego}. Recently, in 2018, Sousa and Capelas de Oliveira defined fractional 
integral operators containing in their kernel the Mittag-Leffler function \cite{Vanterler2} and, 
Baleanu and Fernandez introduced a fractional derivative involving this same function in its kernel 
\cite{Fernandez}. Atangana and Baleanu proposed the so-called AB fractional derivative operators which
contain in the kernel the one-parameter Mittag-Leffler function, \cite{Atangana}.
Based in these fractional operators it was proposed fractional integral 
operators which contain in its kernel the Mittag-Leffler function with respect to another 
function, \cite{Yang}.

Numerous applications have emerged from these operators, among which we can mention:
Zhao and Sun applied the Caputo type fractional derivative with a Prabhakar-like kernel 
to discuss the anomalous relaxation model and its solution, \cite{Zhao} and G\'{o}rska
et al. published a note about the work of Zhao and Sun, \cite{Horzela}; Garra and Garrappa
used fractional operators containing in its kernels the Prabhakar function to present their 
applications in dielectric models of Havriliak-Negami type, \cite{Garrappa}; Giusti
discussed some generalities of relaxation processes involving Prabhakar derivatives, 
\cite{Giusti}; Sandev presented analytical results related to the generalized Langevin 
equation with regularized Prabhakar derivative operator, \cite{Sandev}; Yavuz et al. used
AB fractional derivative operators to solved time-fractional partial differential equations,
\cite{Yavuz}. Xiao et al. discussed in their book applications of fractional derivatives 
with nonsingular kernels in viscoelasticity, \cite{Xiao}.

The main objective of this work is to prove some properties associated with the fractional 
operator proposed by Yang and use it in a fractional differential equation.
The structure of the paper is as follows: In Section \ref{Sec:2}, we present the space of 
functions used throughout the text and some basic definitions and properties associated 
with $\psi$-Riemann-Liouville fractional integral and $\psi$-Caputo fractional derivative.
Section \ref{Sec:3} is devoted to the study of some theorems and lemmas related to
fractional integral operators involving the three-parameters Mittag-Leffler function in 
the kernels with respect to another function. In Section \ref{Sec:4}, we discuss the equivalence
between the Cauchy problem and the Volterra integral equation. We find a closed-form to the 
solution of the Cauchy problem using successive approximations method and $\psi$-Caputo 
fractional derivative of order $\beta$, where $n-1<\beta<n,\,\,n\in\mathbb{N}$,
subject to the initial conditions. Finally, in Section \ref{Sec:5}, we define,
inverse operators of fractional integral operators involving the three-parameters 
Mittag-Leffler function in the kernels with respect to another function. Concluding 
remarks close the paper.

\section{Preliminaries}
\label{Sec:2}
In this section, we present definitions of weighted and continuous spaces of functions \cite{Kilbasbook}.
This section contains, also, definitions and properties associated with the $\psi$-Riemann-Liouville 
fractional integral and the $\psi$-Caputo fractional derivative, \cite{Almeida}. 

Let $\Omega=[a,b]\,\,(0<a<b<\infty)$ be a finite interval of the real axis $\mathbb{R}$
and $n\in\mathbb{N}_0=\{0,1,\ldots\}$. We denote by $C^n(\Omega)$ a space of functions
which are $n$ times continuously differentiable on $\Omega$ with the norm
\begin{eqnarray*}
{\lVert f\rVert}_{C^{n}(\Omega)}=\sum_{k=0}^{n}{\lVert f^{(k)}\rVert}_{C(\Omega)}=
\sum_{k=0}^{n}\max_{x\in\Omega}|f^{(k)}(x)|, \quad\quad n\in\mathbb{N}_0.
\end{eqnarray*}
In particular, for $n=0$, $C^{0}(\Omega)=C(\Omega)$ is the space of the continuous function 
$f$ on $\Omega$ with the norm defined by 
\begin{eqnarray*}
{\lVert f\rVert}_{C(\Omega)}=\max_{x\in\Omega}|f(x)|.
\end{eqnarray*}
The weighted space $C_{\nu,\psi}[a,b]$ of functions $f$ given on $(a,b]$ 
with $\nu\in\mathbb{R}\,\,(0\leq{\nu}<1)$ is
\begin{eqnarray*}
C_{\nu,\psi}(\Omega)=\{f:(a,b]\rightarrow\mathbb{R}; (\psi(x)-\psi(a))^{\nu}f(x)\in{C(\Omega)}\},
\end{eqnarray*}
with the norm
\begin{eqnarray}
{\lVert f\rVert}_{C_{\nu,\psi}(\Omega)}={\lVert (\psi(x)-\psi(a))^{\nu}f(x)\rVert}_{C(\Omega)}=
\max_{x\in\Omega}|(\psi(x)-\psi(a))^{\nu}f(x)|. \label{norm}
\end{eqnarray}
If $\nu=0$, we have $C_{0,\psi}(\Omega)=C(\Omega)$.\\
\begin{definition}\textnormal{\cite{Prabhakar}}
Let $\alpha,\gamma,\rho,z\in\mathbb{R}$ with $\rho>0$. The three-parameters Mittag-Leffler function
is given by
\begin{eqnarray}
E_{\rho,\alpha}^{\gamma}(z)=\sum_{k=0}^{\infty}\frac{(\gamma)_k}{\Gamma(\rho{k}+\alpha)}\frac{z^k}{k!},
\end{eqnarray}
where
$(\gamma)_k$ is the Pochhammer symbol defined as follow
\begin{eqnarray}
(\gamma)_{k}=
\left\{
\begin{array}{l l}
1, &\textnormal{for} \quad k=0\\
\gamma(\gamma+1)\cdots (\gamma+k-1), &\textnormal{for} \quad k=1,2,\ldots, \label{k-pochhammer}\\
\end{array}
\right.
\end{eqnarray}
or, in terms of a quotient of gamma functions, 
\begin{eqnarray}
(\gamma)_{k}=\frac{\Gamma(\gamma+k)}{\Gamma(\gamma)}.
\end{eqnarray}
\end{definition}
\begin{definition}
\label{beta}
Let $x,y\in\mathbb{R}$ with $x>0$ and $y>0$. The beta function, $B(x,y)$, is defined by the Euler 
integral of the first kind
\begin{eqnarray*}
B(x,y)=\int_{0}^{1}t^{x-1}(1-t)^{y-1}{\rm d}t.
\end{eqnarray*}
or, in terms of gamma function
\begin{eqnarray}
B(x,y)=\frac{\Gamma(x)\Gamma(y)}{\Gamma(x+y)}. \label{prop-beta}
\end{eqnarray}
\end{definition}
\begin{definition}\textnormal{\cite{Samko}}\label{psi-RL}
Let $\alpha>0$, $\Omega=[a,b]$ be a finite or infinite interval, $f$ an integrable function defined on $\Omega$ and 
$\psi\in{C}(\Omega)$ an increasing function such that $\psi'(x)\neq{0}$, for all $x\in\Omega$. The left- and right-sided 
$\psi$-Riemann-Liouville fractional integrals of order $\alpha$ of $f$ on $\Omega$ are defined by
\begin{eqnarray}
\mathds{I}^{\alpha;\psi}_{a+}f(x)=\frac{1}{\Gamma(\alpha)}\int_{a}^{x}\psi'(t)\,(\psi(x)-\psi(t))^{\alpha-1}f(t)\,{\rm d}t \label{integral}
\end{eqnarray}
and 
\begin{eqnarray}
\mathds{I}^{\alpha;\psi}_{b-}f(x)=\frac{1}{\Gamma(\alpha)}\int_{x}^{b}\psi'(t)\,(\psi(t)-\psi(x))^{\alpha-1}f(t)\,{\rm d}t,
\label{integral2}
\end{eqnarray}
respectively. For $\alpha\rightarrow{0}$, we have
$$\mathds{I}^{0;\psi}_{a+}f(x)=\mathds{I}^{0;\psi}_{b-}f(x)=f(x).$$
\end{definition}
\begin{definition}\textnormal{\cite{Almeida}}
Let $\alpha>0$, $n\in\mathbb{N}$, $I$ is the interval $-\infty\leq{a}<b\leq{\infty}$, $f,\psi\in{C^n}(I)$ two 
functions such that $\psi$ is increasing and $\psi'(x)\neq{0}$, for all $x\in I$. The left- and right-sided 
$\psi$-Caputo fractional derivatives of $f$ of order $\alpha$ are given by 
$${^{\rm C}\mathds{D}_{a+}^{\alpha;\psi}}f(x)=\mathds{I}_{a+}^{n-\alpha;\psi}
\left(\frac{1}{\psi'(x)}\frac{\rm d}{{\rm d}x}\right)^n f(x)$$
and
$${^{\rm C}\mathds{D}_{b-}^{\alpha;\psi}}f(x)=\mathds{I}_{b-}^{n-\alpha;\psi}
\left(-\frac{1}{\psi'(x)}\frac{\rm d}{{\rm d}x}\right)^n f(x),$$
respectively, where
$$n=[\alpha]+1 \quad \mbox{for}\quad \alpha\notin\mathbb{N}, \quad\quad 
n=\alpha\quad \mbox{for} \quad \alpha\in\mathbb{N}.$$
To simplify notation, we will use the abbreviated notation
$$f^{[n]}_{\psi}(x)=\left(\frac{1}{\psi'(x)}\frac{\rm d}{{\rm d}x}\right)^n f(x).$$
\end{definition}
\begin{property}\textnormal{\cite{Almeida}}
Let $f\in C^{n}[a,b]$, $\alpha>0$ and $\delta>0,$ 
\begin{enumerate}

\item $f(x)=(\psi(x)-\psi(a))^{\delta-1}$, then

$$\mathds{I}^{\alpha;\psi}_{a+}f(x)=\frac{\Gamma(\delta)}{\Gamma(\alpha+\delta)}(\psi(x)-\psi(a))^{\alpha+\delta-1}.$$


\item $\displaystyle \mathds{I}^{\alpha;\psi}_{a+}{^{C}\mathds{D}^{\alpha;\psi}_{a+}}f(x)=f(x)-
\sum_{k=0}^{n-1}f^{[k]}_{\psi}(a)\,\frac{(\psi(x)-\psi(a))^k}{k!},$ where $n-1<\alpha<n$ with
$n\in\mathbb{N}.$
\end{enumerate}
\end{property}
\section{Main results}
\label{Sec:3}

In this section, we present properties associated with fractional integral operators involving the
three-parameters Mittag-Leffler function in the kernels with respect to another function \cite{Yang}.
These operators were motivated by $\psi$-Riemann-Liouville fractional integrals containing in its 
kernels the three-parameters Mittag-Leffler function.
\begin{definition}\textnormal{\cite{Yang}}
\label{op-ML}
Let $\alpha,\gamma,\rho,\omega\in\mathbb{R}$ with $\alpha>0$ and $\rho>0$ and let $\Omega=[a,b]$ 
be a finite or infinite interval of the real axis $\mathbb{R}$, $f$ an integrable 
function defined on $\Omega$ and $\psi\in C(\Omega)$ an increasing function such that $\psi'(x)\neq{0}$, 
for all $x\in\Omega$. The left- and right-sided fractional integral operators involving the
three-parameters Mittag-Leffler function in the kernels with respect to another function are
defined by
\begin{eqnarray}
\mathds{E}_{\rho,\alpha,\omega;a+}^{\gamma;\psi}f(x)=\int_{a}^{x}\psi'(t)\,(\psi(x)-\psi(t))^{\alpha-1}
E_{\rho,\alpha}^{\gamma}[\omega(\psi(x)-\psi(t))^{\rho}]f(t)\,{\rm d}t
\end{eqnarray}
and
\begin{eqnarray*}
\mathds{E}_{\rho,\alpha,\omega;b-}^{\gamma;\psi}f(x)=\int_{x}^{b}\psi'(t)\,(\psi(t)-\psi(x))^{\alpha-1}
E_{\rho,\alpha}^{\gamma}[\omega(\psi(t)-\psi(x))^{\rho}]f(t)\,{\rm d}t,
\end{eqnarray*}
respectively. In particular, if $\gamma=0$ we have the fractional integrals given by \textnormal{Eq.(\ref{integral})} and
\textnormal{Eq.(\ref{integral2})}, this is,
\begin{eqnarray*}
\mathds{E}_{\rho,\alpha,\omega;a+}^{0;\psi}f(x)=\mathds{I}_{a+}^{\alpha;\psi}f(x) \quad\quad
\textnormal{and} \quad\quad \mathds{E}_{\rho,\alpha,\omega;b-}^{0;\psi}f(x)=\mathds{I}_{b-}^{\alpha;\psi}f(x) .
\end{eqnarray*}
If $\alpha\rightarrow{0}$ and $\gamma=0$, we have
\begin{eqnarray*}
\mathds{E}_{\rho,0,\omega;a+}^{0;\psi}f(x)=f(x) \quad\quad
\textnormal{and} \quad\quad \mathds{E}_{\rho,0,\omega;b-}^{0;\psi}f(x)=f(x) .
\end{eqnarray*}
\end{definition}
We prove some properties of the left-sided fractional operator $\mathds{E}_{\rho,\alpha,\omega;a+}^{\gamma;\psi}$. 
The corresponding results for $\mathds{E}_{\rho,\alpha,\omega;b-}^{\gamma;\psi}$ can be derived analogously.
The first result yields the linearity of the integral operators $\mathds{E}_{\rho,\alpha,\omega;a+}^{\gamma;\psi}$
and $\mathds{E}_{\rho,\alpha,\omega;b-}^{\gamma;\psi}$.
\begin{theorem}
\label{th-1}
Let $\alpha,\gamma,\rho,\omega\in\mathbb{R}$, with $\alpha>0$ and $\rho>0$. Also let $f$ and $g$ two functions
and $\lambda,\mu$ are arbitrary real constants, then
\begin{eqnarray*}
\mathds{E}_{\rho,\alpha,\omega;a+}^{\gamma;\psi}[\lambda f(x)\pm\mu g(x)]=
\lambda\mathds{E}_{\rho,\alpha,\omega;a+}^{\gamma;\psi}f(x)\pm\mu\mathds{E}_{\rho,\alpha,\omega;a+}^{\gamma;\psi}g(x)
\end{eqnarray*}
and
\begin{eqnarray*}
\mathds{E}_{\rho,\alpha,\omega;b-}^{\gamma;\psi}[\lambda f(x)\pm\mu g(x)]=
\lambda\mathds{E}_{\rho,\alpha,\omega;b-}^{\gamma;\psi}f(x)\pm\mu\mathds{E}_{\rho,\alpha,\omega;b-}^{\gamma;\psi}g(x).
\end{eqnarray*}
\end{theorem}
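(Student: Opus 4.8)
The plan is to unwind the definition of the operator $\mathds{E}_{\rho,\alpha,\omega;a+}^{\gamma;\psi}$ and simply invoke the linearity of the Riemann integral. Concretely, I would write
\begin{eqnarray*}
\mathds{E}_{\rho,\alpha,\omega;a+}^{\gamma;\psi}[\lambda f(x)\pm\mu g(x)]
=\int_{a}^{x}\psi'(t)\,(\psi(x)-\psi(t))^{\alpha-1}
E_{\rho,\alpha}^{\gamma}[\omega(\psi(x)-\psi(t))^{\rho}]\,[\lambda f(t)\pm\mu g(t)]\,{\rm d}t,
\end{eqnarray*}
then distribute $[\lambda f(t)\pm\mu g(t)]$ over the kernel, split the integral into a sum of two integrals, and pull the constants $\lambda$ and $\mu$ outside each integral. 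Recognizing the two resulting integrals as $\mathds{E}_{\rho,\alpha,\omega;a+}^{\gamma;\psi}f(x)$ and $\mathds{E}_{\rho,\alpha,\omega;a+}^{\gamma;\psi}g(x)$ finishes the left-sided identity.

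The only hypotheses actually needed are that $f$ and $g$ are integrable on $\Omega$ (so that the integrals converge and the linearity of the integral applies) and the standing assumptions on $\psi$, $\alpha$, $\rho$ that make the kernel well defined; no property of the three-parameters Mittag-Leffler function beyond its appearing as a fixed multiplicative factor inside the integrand is used. For the right-sided operator $\mathds{E}_{\rho,\alpha,\omega;b-}^{\gamma;\psi}$, the argument is verbatim the same with $\int_{a}^{x}$ replaced by $\int_{x}^{b}$ and $(\psi(x)-\psi(t))$ replaced by $(\psi(t)-\psi(x))$, so I would just remark that it follows analogously rather than repeating the computation.

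There is essentially no obstacle here: the statement is a direct consequence of the linearity of integration, and the proof is a two- or three-line calculation. If anything, the only point worth a sentence is the justification that the integral of a sum is the sum of the integrals, which is valid because each summand is integrable under the standing hypotheses on $f$, $g$, and the kernel.
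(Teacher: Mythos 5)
Your proposal is correct and follows the same route as the paper, which simply notes that the result is a consequence of the linearity of these integral operators; you merely spell out the one-line argument (distribute, split the integral, pull out constants) in full detail. Nothing further is needed.
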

\begin{proof}
The result follows from the fact these integral operators are linear.
\end{proof}
The second result consists in calculating the fractional integrals $\mathds{E}_{\rho,\alpha,\omega;a+}^{\gamma;\psi}$
and $\mathds{E}_{\rho,\alpha,\omega;b-}^{\gamma;\psi}$ of a power function.
\begin{lemma}\label{lemma1}
Let $\alpha,\beta,\gamma,\rho,\omega\in\mathbb{R}$, with $\alpha>0$, $\beta>0$ and $\rho>0$. Then,
\begin{eqnarray}
\mathds{E}_{\rho,\alpha,\omega;a+}^{\gamma;\psi}[(\psi(x)-\psi(a))^{\beta-1}]=\Gamma(\beta)(\psi(x)-\psi(a))^{\alpha+\beta-1}
E_{\rho,\alpha+\beta}^{\gamma}[\omega(\psi(x)-\psi(a))^{\rho}] \label{lemma}
\end{eqnarray}
and
\begin{eqnarray*}
\mathds{E}_{\rho,\alpha,\omega;b-}^{\gamma;\psi}[(\psi(b)-\psi(x))^{\beta-1}]=\Gamma(\beta)(\psi(b)-\psi(x))^{\alpha+\beta-1}
E_{\rho,\alpha+\beta}^{\gamma}[\omega(\psi(b)-\psi(x))^{\rho}].
\end{eqnarray*}
\end{lemma}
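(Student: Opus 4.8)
The plan is to prove the left-sided identity (\ref{lemma}) by expanding the three-parameters Mittag-Leffler function in its defining power series, interchanging the series and the integral, and then reducing each term to a $\psi$-Riemann-Liouville fractional integral of a power, which is already evaluated in Property 1. First I would write
\begin{eqnarray*}
\mathds{E}_{\rho,\alpha,\omega;a+}^{\gamma;\psi}[(\psi(x)-\psi(a))^{\beta-1}]
=\int_{a}^{x}\psi'(t)\,(\psi(x)-\psi(t))^{\alpha-1}
\sum_{k=0}^{\infty}\frac{(\gamma)_k\,\omega^k}{\Gamma(\rho k+\alpha)\,k!}
(\psi(x)-\psi(t))^{\rho k}\,(\psi(t)-\psi(a))^{\beta-1}\,{\rm d}t.
\end{eqnarray*}
Assuming the interchange is justified, this equals
$\sum_{k=0}^{\infty}\frac{(\gamma)_k\,\omega^k}{k!}\,\mathds{I}_{a+}^{\rho k+\alpha;\psi}\big[(\psi(x)-\psi(a))^{\beta-1}\big]$,
since $\frac{1}{\Gamma(\rho k+\alpha)}\int_a^x\psi'(t)(\psi(x)-\psi(t))^{\rho k+\alpha-1}(\psi(t)-\psi(a))^{\beta-1}\,{\rm d}t$ is exactly $\mathds{I}_{a+}^{\rho k+\alpha;\psi}$ applied to the power function. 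Note $\rho k+\alpha>0$ for every $k\ge 0$, so each integral operator is well defined.

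Next I would apply Property 1, item 1, with the order $\rho k+\alpha$ and exponent $\delta=\beta$ (valid since $\beta>0$), obtaining
$\mathds{I}_{a+}^{\rho k+\alpha;\psi}\big[(\psi(x)-\psi(a))^{\beta-1}\big]
=\frac{\Gamma(\beta)}{\Gamma(\rho k+\alpha+\beta)}(\psi(x)-\psi(a))^{\rho k+\alpha+\beta-1}$.
Substituting back gives
\begin{eqnarray*}
\mathds{E}_{\rho,\alpha,\omega;a+}^{\gamma;\psi}[(\psi(x)-\psi(a))^{\beta-1}]
=\Gamma(\beta)(\psi(x)-\psi(a))^{\alpha+\beta-1}
\sum_{k=0}^{\infty}\frac{(\gamma)_k}{\Gamma(\rho k+\alpha+\beta)}
\frac{\big[\omega(\psi(x)-\psi(a))^{\rho}\big]^k}{k!},
\end{eqnarray*}
and the remaining sum is precisely $E_{\rho,\alpha+\beta}^{\gamma}[\omega(\psi(x)-\psi(a))^{\rho}]$ by the definition of the three-parameters Mittag-Leffler function, which proves (\ref{lemma}). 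The right-sided identity follows by the same computation with $\psi(x)-\psi(t)$ replaced by $\psi(t)-\psi(x)$ and $\psi(x)-\psi(a)$ replaced by $\psi(b)-\psi(x)$, using the right-sided analogue of Property 1.

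The main obstacle is the rigorous justification of the term-by-term integration, i.e. swapping $\sum_k$ and $\int_a^x$. I would handle this by noting that on any fixed $x\in\Omega$ the entire-function nature of $z\mapsto E_{\rho,\alpha}^{\gamma}(z)$ makes the series converge uniformly for $t\in[a,x]$ (the argument $\omega(\psi(x)-\psi(t))^{\rho}$ stays in a bounded set, since $\psi$ is continuous hence bounded on the compact interval), while the factor $\psi'(t)(\psi(x)-\psi(t))^{\alpha-1}(\psi(t)-\psi(a))^{\beta-1}$ is integrable over $(a,x)$ because $\alpha>0$ and $\beta>0$; uniform convergence of the series against an integrable weight legitimizes the interchange (dominated convergence on partial sums, or the Weierstrass-type bound). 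A brief remark that $\psi$ being increasing with $\psi'\ne 0$ guarantees the exponents behave as claimed completes the argument.
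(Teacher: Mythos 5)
Your proposal is correct and follows essentially the same route as the paper: expand the Mittag-Leffler kernel as its power series, interchange summation and integration, and evaluate each resulting term, reassembling the series as $E_{\rho,\alpha+\beta}^{\gamma}$. The only cosmetic difference is that you evaluate the termwise integral by citing Property 1 (item 1) for the $\psi$-Riemann-Liouville integral of a power function, whereas the paper performs the same computation directly via the substitution $\tau=\dfrac{\psi(t)-\psi(a)}{\psi(x)-\psi(a)}$ and the Beta function identity (\ref{prop-beta}); your explicit justification of the interchange is a useful addition that the paper only asserts via uniform convergence.
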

\begin{proof}
From \textbf{Definition \ref{op-ML}}, we can write
\begin{eqnarray*}
\mathds{E}_{\rho,\alpha,\omega;a+}^{\gamma;\psi}[(\psi(x)-\psi(a))^{\beta-1}]=
\int_{a}^{x}\psi'(t)\sum_{k=0}^{\infty}\left[\frac{(\gamma)_k}{\Gamma(\rho{k}+\alpha)}\frac{\omega^k(\psi(x)-\psi(t))^{\rho{k}+\alpha-1}}{k!}\right](\psi(t)-\psi(a))^{\beta-1}\,{\rm d}t.
\end{eqnarray*}
Making the change of variable $\displaystyle \tau=\frac{\psi(t)-\psi(a)}{\psi(x)-\psi(a)}$ and 
since, the entire Mittag-Leffler function is uniformly convergent, we have interchange the order 
of integration and summation, to get
\begin{eqnarray*}
\mathds{E}_{\rho,\alpha,\omega;a+}^{\gamma;\psi}[(\psi(x)-\psi(a))^{\beta-1}]=
\sum_{k=0}^{\infty}\frac{\omega^k(\gamma)_k}{\Gamma(\rho{k}+\alpha)k!}(\psi(x)-\psi(a))^{\rho{k}+\alpha+\beta-1}
\int_{0}^{1}(1-\tau)^{\rho{k}+\alpha-1}\tau^{\beta-1}{\rm d}\tau,
\end{eqnarray*}
which, in accordance with Eq.(\ref{prop-beta}), yields Eq.(\ref{lemma}).
\end{proof}
The following assertion, which yields the boundedness of the fractional integration operator
$\mathds{E}_{\rho,\alpha,\omega;a+}^{\gamma;\psi}$ from the space $C_{\nu,\psi}$.
\begin{theorem}
\label{th-2}
Let $\alpha,\gamma,\rho,\omega\in\mathbb{R}$ with $\alpha>0$ and $b>a$. The operators
$\mathds{E}_{\rho,\alpha,\omega;a+}^{\gamma;\psi}$ and $\mathds{E}_{\rho,\alpha,\omega;b-}^{\gamma;\psi}$ 
are bounded on $C_{\nu,\psi}$
\begin{eqnarray*}
{\lVert\mathds{E}_{\rho,\alpha,\omega;a+}^{\gamma;\psi}f\rVert}_{C_{\nu,\psi}[a,b]}\leq
M{\lVert f\rVert}_{C_{\nu,\psi}[a,b]}
\quad\quad \mbox{and} \quad\quad
{\lVert\mathds{E}_{\rho,\alpha,\omega;b-}^{\gamma;\psi}f\rVert}_{C_{\nu,\psi}[a,b]}\leq
M{\lVert f\rVert}_{C_{\nu,\psi}[a,b]},
\end{eqnarray*}
where
\begin{eqnarray}
M=|(\psi(b)-\psi(a))^{\alpha}\,{E}_{\rho,\alpha+1}^{\gamma}[\omega\,(\psi(b)-\psi(a))^{\rho}]|. \label{series}
\end{eqnarray}
\end{theorem}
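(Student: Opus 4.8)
The plan is to estimate the weighted norm of $\mathds{E}_{\rho,\alpha,\omega;a+}^{\gamma;\psi}f$ directly from the definition, reducing the bound to an application of Lemma~\ref{lemma1} with the constant function. First I would fix $f\in C_{\nu,\psi}[a,b]$ and write $f(t)=(\psi(t)-\psi(a))^{-\nu}g(t)$ where $g(t)=(\psi(t)-\psi(a))^{\nu}f(t)\in C[a,b]$, so that $|g(t)|\le {\lVert f\rVert}_{C_{\nu,\psi}[a,b]}$ for all $t\in(a,b]$. Then for $x\in(a,b]$,
\begin{eqnarray*}
|\mathds{E}_{\rho,\alpha,\omega;a+}^{\gamma;\psi}f(x)|\le
{\lVert f\rVert}_{C_{\nu,\psi}[a,b]}\int_{a}^{x}\psi'(t)\,(\psi(x)-\psi(t))^{\alpha-1}
\bigl|E_{\rho,\alpha}^{\gamma}[\omega(\psi(x)-\psi(t))^{\rho}]\bigr|\,(\psi(t)-\psi(a))^{-\nu}\,{\rm d}t .
\end{eqnarray*}

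Next I would recognize the remaining integral as $\mathds{E}_{\rho,\alpha,|\omega|;a+}^{|\gamma|;\psi}$ (or a crude majorization thereof) applied to the power $(\psi(t)-\psi(a))^{\beta-1}$ with $\beta=1-\nu>0$. Bounding the series termwise by absolute values and invoking Lemma~\ref{lemma1} gives
\begin{eqnarray*}
\int_{a}^{x}\psi'(t)\,(\psi(x)-\psi(t))^{\alpha-1}
\bigl|E_{\rho,\alpha}^{\gamma}[\omega(\psi(x)-\psi(t))^{\rho}]\bigr|\,(\psi(t)-\psi(a))^{-\nu}\,{\rm d}t
\le \Gamma(1-\nu)(\psi(x)-\psi(a))^{\alpha-\nu}\,\widetilde{E}(x),
\end{eqnarray*}
where $\widetilde{E}(x)$ is the series $\sum_k \frac{|(\gamma)_k|\,|\omega|^k}{\Gamma(\rho k+\alpha+1-\nu)\,k!}(\psi(x)-\psi(a))^{\rho k}$, a positive increasing function of $x$ since $\psi$ is increasing. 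Multiplying by the weight $(\psi(x)-\psi(a))^{\nu}$ and taking $\max_{x\in[a,b]}$ leaves $(\psi(x)-\psi(a))^{\alpha}\widetilde{E}(x)$, which attains its maximum at $x=b$; this produces a finite constant $M$ and the stated inequality. The right-sided operator is handled by the symmetric change of variable $x\mapsto a+b-x$ (or directly by the same computation with $\psi(t)-\psi(x)$ replacing $\psi(x)-\psi(t)$).

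The main obstacle — and the point where the statement as written in the paper is slightly imprecise — is that the displayed constant $M$ in Eq.~(\ref{series}) uses $E_{\rho,\alpha+1}^{\gamma}$ rather than a series with absolute values of the coefficients and $\beta=1-\nu$; strictly one needs $\alpha-\nu\ge 0$ (so that $(\psi(x)-\psi(a))^{\alpha-\nu}$ is bounded and increasing on $[a,b]$) and one should carry $|(\gamma)_k|$, $|\omega|^k$ inside the sum to guarantee convergence when $\gamma<0$ or $\omega<0$. I would either state the estimate with the absolute-value series $M=\Gamma(1-\nu)\,(\psi(b)-\psi(a))^{\alpha-\nu}\sum_{k=0}^{\infty}\frac{|(\gamma)_k|\,|\omega|^k}{\Gamma(\rho k+\alpha+1-\nu)\,k!}(\psi(b)-\psi(a))^{\rho k}$, or, if one wants the clean form in Eq.~(\ref{series}), add the hypotheses $0\le\nu<1$, $\nu\le\alpha$, $\gamma\ge 0$, $\omega\ge 0$ so that all Pochhammer factors are nonnegative and the series is genuinely a value of the three-parameter Mittag-Leffler function. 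The convergence of that series for every real argument is immediate from the ratio test (the $\Gamma(\rho k+\alpha+1)$ in the denominator dominates), so $M<\infty$ in either case, and boundedness follows.
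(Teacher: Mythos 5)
Your proposal is correct and, at the level of strategy, follows the same route as the paper: estimate the weighted norm directly from the definition, evaluate the resulting kernel integral in closed form, take the maximum at $x=b$, and note convergence of the series defining $M$. The genuine difference is in how the weight is handled, and here your version is actually the more careful one. The paper pulls ${\lVert f\rVert}_{C_{\nu,\psi}[a,b]}$ out and is then left with $\int_a^x|\psi'(t)(\psi(x)-\psi(t))^{\alpha-1}E_{\rho,\alpha}^{\gamma}[\omega(\psi(x)-\psi(t))^{\rho}]|\,{\rm d}t$, i.e.\ it silently discards the factor $(\psi(x)-\psi(a))^{\nu}(\psi(t)-\psi(a))^{-\nu}$, which is $\geq 1$ for $t\leq x$, so its estimate is only justified for $\nu=0$; it also writes the integral of the absolute value as the absolute value of the integral and bounds the maximum over $x$ by the value at $b$, which requires the sign restrictions ($\gamma\geq 0$, $\omega\geq 0$, or absolute values carried through the series) that you identify. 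Your reduction — substituting $f(t)=(\psi(t)-\psi(a))^{-\nu}g(t)$ and invoking Lemma \ref{lemma1} with $\beta=1-\nu$ termwise in absolute value — repairs both defects and yields a bound valid for every $0\leq\nu<1$, at the cost of replacing the clean constant \eqref{series} by $\Gamma(1-\nu)(\psi(b)-\psi(a))^{\alpha}\sum_{k\geq 0}\frac{|(\gamma)_k|\,|\omega|^k}{\Gamma(\rho k+\alpha+1-\nu)\,k!}(\psi(b)-\psi(a))^{\rho k}$ (which for $\nu>0$ genuinely exceeds the paper's $M$, since $B(1-\nu,\rho k+\alpha)\geq B(1,\rho k+\alpha)$; so the paper's statement is not merely sloppily proved but needs either $\nu=0$ or the modified constant). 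Two small corrections to your write-up: in your displayed alternative constant the exponent should be $\alpha$, not $\alpha-\nu$, because the weight $(\psi(x)-\psi(a))^{\nu}$ is multiplied back in before maximizing (exactly as you say in the body of the argument); and the extra hypothesis $\nu\leq\alpha$ is unnecessary — the possibly unbounded factor $(\psi(x)-\psi(a))^{\alpha-\nu}$ only appears before reweighting, while the weighted quantity behaves like $(\psi(x)-\psi(a))^{\alpha}$ near $a$ and is increasing on $[a,b]$ for any $\alpha>0$.
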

\begin{proof}
According to \textnormal{Eq.(\ref{norm})}, we have
\begin{eqnarray*}
{\lVert\mathds{E}_{\rho,\alpha,\omega;a+}^{\gamma;\psi}f\rVert}_{C_{\nu,\psi}[a,b]}&=&
{\lVert(\psi(x)-\psi(a))^{\nu}\,\mathds{E}_{\rho,\alpha,\omega;a+}^{\gamma;\psi}f\rVert}_{C[a,b]}\\
&=&\max_{x\in[a,b]}|(\psi(x)-\psi(a))^{\nu}\,\mathds{E}_{\rho,\alpha,\omega;a+}^{\gamma;\psi}f|\\
&\leq &{\lVert f\rVert}_{C_{\nu,\psi}[a,b]}\,
\max_{x\in[a,b]}\int_{a}^{x}|\psi'(t)\,(\psi(x)-\psi(t))^{\alpha-1}E_{\rho,\alpha}^{\gamma}[\omega(\psi(x)-\psi(t))^{\rho}]|\,
{\rm d}t\\
&=&{\lVert f\rVert}_{C_{\nu,\psi}[a,b]}\,\max_{x\in[a,b]}|(\psi(x)-\psi(a))^{\alpha}E_{\rho,\alpha+1}^{\gamma}
[\omega(\psi(x)-\psi(a))^{\rho}]|\\
&\leq &|(\psi(b)-\psi(a))^{\alpha}\,E_{\rho,\alpha+1}^{\gamma}[\omega(\psi(b)-\psi(a))^{\rho}]|\,
{\lVert f\rVert}_{C_{\nu,\psi}[a,b]}\\
&=&M{\lVert f\rVert}_{C_{\nu,\psi}[a,b]},
\end{eqnarray*}
where $M$ is given by Eq.(\ref{series}). Consider the following relation with $x\in\mathbb{R}$, $x>0$ and
$(x+b)\in\mathbb{R}\backslash \{0,-1,-2,\dots\}$, \cite{Mubeen}
$$\lim_{x \to \infty}\frac{\Gamma(x+a)}{\Gamma(x+b)}x^{b-a}=1.$$
We denote by $c_k$ the $k$th term of the series in Eq.(\ref{series}), then by ratio test the series converges,
\begin{eqnarray*}
\lim_{k \to \infty}\biggl|\frac{c_{k+1}}{c_k}\biggl| &=&\lim_{k \to \infty}\biggl|\frac{\omega(\gamma+k)\,
\Gamma(\rho{k}+\alpha+1)}{(k+1)\,\Gamma(\rho k+\rho+\alpha+1)}\biggl|(\psi(b)-\psi(a))^{\rho}\\
&=&\lim_{k \to \infty}\left[\frac{(\gamma+k)}{(k+1)\,(\rho k)^{\rho}}\right]|\omega|(\psi(b)-\psi(a))^{\rho}\rightarrow{0}.
\end{eqnarray*}
\end{proof}
The next result shows the composition of the fractional integral of a function with respect to
another function $\mathds{I}_{a+}^{\beta;\psi}$ and fractional integral operator involving the
three-parameters Mittag-Leffler function in the kernel with respect to another function
$\mathds{E}_{\rho,\alpha,\omega;a+}^{\gamma;\psi}$.
\begin{theorem}
\label{th-3}
Let $\alpha,\beta,\gamma,\rho,\omega\in\mathbb{R}$ with $\alpha>0$, $\beta>0$ and $\rho>0$. Then,
\begin{eqnarray}
\mathds{E}_{\rho,\alpha,\omega;a+}^{\gamma;\psi}\mathds{I}_{a+}^{\beta;\psi}f(x)=
\mathds{E}_{\rho,\alpha+\beta,\omega;a+}^{\gamma;\psi}f(x)=
\mathds{I}_{a+}^{\beta;\psi}\mathds{E}_{\rho,\alpha,\omega;a+}^{\gamma;\psi}f(x),
\end{eqnarray}
and
\begin{eqnarray*}
\mathds{E}_{\rho,\alpha,\omega;b-}^{\gamma;\psi}\mathds{I}_{b-}^{\beta;\psi}f(x)=
\mathds{E}_{\rho,\alpha+\beta,\omega;b-}^{\gamma;\psi}f(x)=
\mathds{I}_{b-}^{\beta;\psi}\mathds{E}_{\rho,\alpha,\omega;b-}^{\gamma;\psi}f(x).
\end{eqnarray*}
\end{theorem}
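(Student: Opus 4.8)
The plan is to establish the chain of equalities for the left-sided operator by collapsing each composition into a single iterated integral and then invoking Lemma~\ref{lemma1} (and Property~1); the right-sided identities follow by the identical argument with the orientation of the interval reversed, so I would only write out the left-sided case.

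For the first equality, $\mathds{E}_{\rho,\alpha,\omega;a+}^{\gamma;\psi}\mathds{I}_{a+}^{\beta;\psi}f=\mathds{E}_{\rho,\alpha+\beta,\omega;a+}^{\gamma;\psi}f$, I would write out both operators to get
$$\mathds{E}_{\rho,\alpha,\omega;a+}^{\gamma;\psi}\mathds{I}_{a+}^{\beta;\psi}f(x)=\frac{1}{\Gamma(\beta)}\int_{a}^{x}\!\!\int_{a}^{t}\psi'(t)\psi'(s)(\psi(x)-\psi(t))^{\alpha-1}E_{\rho,\alpha}^{\gamma}[\omega(\psi(x)-\psi(t))^{\rho}](\psi(t)-\psi(s))^{\beta-1}f(s)\,{\rm d}s\,{\rm d}t,$$
then apply Fubini's theorem to interchange the order of integration over the triangle $a\le s\le t\le x$. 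This is legitimate since $f$ is integrable (we work in $C_{\nu,\psi}$) and the remaining kernel is absolutely integrable on that triangle, which follows from the bounds used in the proof of Theorem~\ref{th-2} together with the convergence of the Mittag-Leffler series. After the swap the inner integral runs over $t\in[s,x]$, and by Lemma~\ref{lemma1} applied with lower endpoint $s$ to the power function $(\psi(t)-\psi(s))^{\beta-1}$ it equals $\Gamma(\beta)(\psi(x)-\psi(s))^{\alpha+\beta-1}E_{\rho,\alpha+\beta}^{\gamma}[\omega(\psi(x)-\psi(s))^{\rho}]$; substituting this back, the $\Gamma(\beta)$'s cancel and what remains is precisely $\mathds{E}_{\rho,\alpha+\beta,\omega;a+}^{\gamma;\psi}f(x)$.

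For the second equality, $\mathds{E}_{\rho,\alpha+\beta,\omega;a+}^{\gamma;\psi}f=\mathds{I}_{a+}^{\beta;\psi}\mathds{E}_{\rho,\alpha,\omega;a+}^{\gamma;\psi}f$, I would proceed the same way: expand $\mathds{I}_{a+}^{\beta;\psi}\mathds{E}_{\rho,\alpha,\omega;a+}^{\gamma;\psi}f(x)$ as a double integral over $a\le s\le t\le x$, apply Fubini, and be left with the inner integral
$$\int_{s}^{x}\psi'(t)(\psi(x)-\psi(t))^{\beta-1}(\psi(t)-\psi(s))^{\alpha-1}E_{\rho,\alpha}^{\gamma}[\omega(\psi(t)-\psi(s))^{\rho}]\,{\rm d}t,$$
which is $\Gamma(\beta)$ times $\mathds{I}_{s+}^{\beta;\psi}$ applied to $(\psi(\cdot)-\psi(s))^{\alpha-1}E_{\rho,\alpha}^{\gamma}[\omega(\psi(\cdot)-\psi(s))^{\rho}]$. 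Expanding the Mittag-Leffler series and integrating term by term with Property~1(item~1) evaluates this to $\Gamma(\beta)(\psi(x)-\psi(s))^{\alpha+\beta-1}E_{\rho,\alpha+\beta}^{\gamma}[\omega(\psi(x)-\psi(s))^{\rho}]$, and the outer integral then reads off as $\mathds{E}_{\rho,\alpha+\beta,\omega;a+}^{\gamma;\psi}f(x)$.

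The main obstacle is the rigorous justification of the two interchanges of operations — Fubini for the double integral and term-by-term integration of the Mittag-Leffler series — both of which rest on the local uniform (respectively absolute) convergence already exploited in the proofs of Lemma~\ref{lemma1} and Theorem~\ref{th-2}; once those are in place, the rest is the bookkeeping above. Alternatively, one can avoid Fubini altogether by writing $\mathds{E}_{\rho,\alpha,\omega;a+}^{\gamma;\psi}f=\sum_{k\geq 0}\frac{(\gamma)_k\omega^k}{k!}\,\mathds{I}_{a+}^{\rho k+\alpha;\psi}f$ and applying the semigroup law $\mathds{I}_{a+}^{\mu;\psi}\mathds{I}_{a+}^{\nu;\psi}=\mathds{I}_{a+}^{\mu+\nu;\psi}$ termwise, but this requires the same convergence input plus a short proof of that semigroup law.
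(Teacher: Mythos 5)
Your proposal is correct and takes essentially the same route as the paper: expand the composition as a double integral, interchange the order of integration (the paper invokes the Dirichlet formula for this), and evaluate the resulting inner integral, which in both cases reduces to the beta-function computation underlying Lemma~\ref{lemma1}. The only cosmetic difference is that you cite Lemma~\ref{lemma1} with a shifted base point (and Property~1 term by term for the second identity) where the paper repeats the change of variable $\tau=\frac{\psi(t)-\psi(u)}{\psi(x)-\psi(u)}$ explicitly, so there is no substantive divergence.
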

\begin{proof}
From \textbf{Definition \ref{op-ML}}, we have
\begin{eqnarray*}
\mathds{E}_{\rho,\alpha,\omega;a+}^{\gamma;\psi}\mathds{I}_{a+}^{\beta;\psi}f(x)&=&
\int_{a}^{x}\psi'(t)\,(\psi(x)-\psi(t))^{\alpha-1}E_{\rho,\alpha}^{\gamma}[\omega(\psi(x)-\psi(t))^{\rho}]\\
&\times &\left[\frac{1}{\Gamma(\beta)}\int_{a}^{t}\psi'(u)\,(\psi(t)-\psi(u))^{\beta-1}f(u)\,{\rm d}u\right]{\rm d}t.
\end{eqnarray*}
Applying the Dirichlet formula to interchange the order of integration, we obtain
\begin{eqnarray*}
\mathds{E}_{\rho,\alpha,\omega;a+}^{\gamma;\psi}\mathds{I}_{a+}^{\beta;\psi}f(x)&=&
\frac{1}{\Gamma(\beta)}\int_{a}^{x}\psi'(u)f(u)\,{\rm d}u\int_{u}^{x}\psi'(t)\,(\psi(x)-\psi(t))^{\alpha-1}\\
&\times & E_{\rho,\alpha}^{\gamma}[\omega(\psi(x)-\psi(t))^{\rho}](\psi(t)-\psi(u))^{\beta-1}{\rm d}t,
\end{eqnarray*}
and by changing the variable $\displaystyle \tau=\frac{\psi(t)-\psi(u)}{\psi(x)-\psi(u)}$, 
in the above second integral and rearranging, we find that
\begin{eqnarray*}
\mathds{E}_{\rho,\alpha,\omega;a+}^{\gamma;\psi}\mathds{I}_{a+}^{\beta;\psi}f(x)=
\frac{1}{\Gamma(\beta)}\int_{a}^{x}\psi'(u)f(u)\,{\rm d}u\sum_{k=0}^{\infty}(\psi(x)-\psi(u))^{\rho{k}+\alpha+\beta-1}
\frac{\omega^{k}(\gamma)_k}{\Gamma(\rho{k}+\alpha)k!}\int_{0}^{1}(1-\tau)^{\rho{k}+\alpha-1}\tau^{\beta-1}{\rm d}\tau.
\end{eqnarray*}
By Eq.(\ref{prop-beta}), we can write
\begin{eqnarray*}
\mathds{E}_{\rho,\alpha,\omega;a+}^{\gamma;\psi}\mathds{I}_{a+}^{\beta;\psi}f(x)&=&
\int_{a}^{x}\psi'(u)(\psi(x)-\psi(u))^{\alpha+\beta-1}E_{\rho,\alpha+\beta}^{\gamma}[\omega(\psi(x)-\psi(u))^{\rho}]
f(u)\,{\rm d}u\\
&=&\mathds{E}_{\rho,\alpha+\beta,\omega;a+}^{\gamma;\psi}f(x).
\end{eqnarray*}
The proof of $\mathds{I}_{a+}^{\beta;\psi}\mathds{E}_{\rho,\alpha,\omega;a+}^{\gamma;\psi}f(x)=
\mathds{E}_{\rho,\alpha+\beta,\omega;a+}^{\gamma;\psi}f(x)$ is similar.
\end{proof}
The following assertion for fractional integral operator involving the
three-parameters Mittag-Leffler function in the kernel with respect to another function 
is the validity of the semigroup property.
\begin{theorem}
\label{th-4}
Let $\alpha,\beta,\gamma,\nu,\rho,\sigma,\omega\in\mathbb{R}$ with $\alpha>0$, $\nu>0$ and $\rho>0$. Then,
\begin{eqnarray}
\mathds{E}_{\rho,\alpha,\omega;a+}^{\gamma;\psi}\mathds{E}_{\rho,\nu,\omega;a+}^{\sigma;\psi}f(x)=
\mathds{E}_{\rho,\alpha+\nu,\omega;a+}^{\gamma+\sigma;\psi}f(x)=
\mathds{E}_{\rho,\nu,\omega;a+}^{\sigma;\psi}\mathds{E}_{\rho,\alpha,\omega;a+}^{\gamma;\psi}f(x)
\end{eqnarray}
and
\begin{eqnarray*}
\mathds{E}_{\rho,\alpha,\omega;b-}^{\gamma;\psi}\mathds{E}_{\rho,\nu,\omega;b-}^{\sigma;\psi}f(x)=
\mathds{E}_{\rho,\alpha+\nu,\omega;b-}^{\gamma+\sigma;\psi}f(x)=
\mathds{E}_{\rho,\nu,\omega;b-}^{\sigma;\psi}\mathds{E}_{\rho,\alpha,\omega;b-}^{\gamma;\psi}f(x).
\end{eqnarray*}
\end{theorem}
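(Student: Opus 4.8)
The plan is to expand the left-hand composition straight from \textbf{Definition \ref{op-ML}} applied twice, writing
$$\mathds{E}_{\rho,\alpha,\omega;a+}^{\gamma;\psi}\mathds{E}_{\rho,\nu,\omega;a+}^{\sigma;\psi}f(x)=\int_{a}^{x}\psi'(t)\,(\psi(x)-\psi(t))^{\alpha-1}E_{\rho,\alpha}^{\gamma}[\omega(\psi(x)-\psi(t))^{\rho}]\left[\int_{a}^{t}\psi'(u)\,(\psi(t)-\psi(u))^{\nu-1}E_{\rho,\nu}^{\sigma}[\omega(\psi(t)-\psi(u))^{\rho}]f(u)\,{\rm d}u\right]{\rm d}t,$$
and then to follow the same three moves used in the proofs of \textbf{Lemma \ref{lemma1}} and \textbf{Theorem \ref{th-3}}. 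First I would replace each Mittag-Leffler function by its defining power series (with summation indices $k$ and $j$) and interchange integration with the two summations, which is legitimate because $\psi$ is continuous and increasing and the series converge uniformly on the relevant compact sets (the same fact underlying the boundedness in \textbf{Theorem \ref{th-2}}). Next I would apply the Dirichlet formula to swap the order of the $t$- and $u$-integrations, bringing $\psi'(u)f(u)$ outside, and finally perform the substitution $\tau=(\psi(t)-\psi(u))/(\psi(x)-\psi(u))$ in the inner $t$-integral.

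After the substitution the inner integral becomes $\int_{0}^{1}(1-\tau)^{\rho k+\alpha-1}\tau^{\rho j+\nu-1}\,{\rm d}\tau=B(\rho k+\alpha,\rho j+\nu)$, which by Eq.(\ref{prop-beta}) equals $\Gamma(\rho k+\alpha)\Gamma(\rho j+\nu)/\Gamma(\rho(k+j)+\alpha+\nu)$; the two gamma factors in the numerator cancel exactly the denominators contributed by $E_{\rho,\alpha}^{\gamma}$ and $E_{\rho,\nu}^{\sigma}$. The kernel multiplying $f(u)$ is then the double series
$$\sum_{k=0}^{\infty}\sum_{j=0}^{\infty}\frac{\omega^{k+j}(\gamma)_k(\sigma)_j}{\Gamma(\rho(k+j)+\alpha+\nu)\,k!\,j!}\,(\psi(x)-\psi(u))^{\rho(k+j)+\alpha+\nu-1}.$$
Collecting the terms with $k+j=m$ and pulling out $1/m!$, the combinatorial factor is $\tfrac{1}{m!}\sum_{k=0}^{m}\binom{m}{k}(\gamma)_k(\sigma)_{m-k}$, which by the Chu--Vandermonde identity for Pochhammer symbols equals $(\gamma+\sigma)_m/m!$. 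Hence the kernel collapses to $(\psi(x)-\psi(u))^{\alpha+\nu-1}E_{\rho,\alpha+\nu}^{\gamma+\sigma}[\omega(\psi(x)-\psi(u))^{\rho}]$, and re-inserting it under $\int_{a}^{x}\psi'(u)(\cdots)f(u)\,{\rm d}u$ produces precisely $\mathds{E}_{\rho,\alpha+\nu,\omega;a+}^{\gamma+\sigma;\psi}f(x)$. Because $\alpha+\nu$ and $\gamma+\sigma$ are symmetric in the two pairs $(\alpha,\gamma)$ and $(\nu,\sigma)$, running the identical computation with the operators interchanged gives $\mathds{E}_{\rho,\nu,\omega;a+}^{\sigma;\psi}\mathds{E}_{\rho,\alpha,\omega;a+}^{\gamma;\psi}f(x)=\mathds{E}_{\rho,\alpha+\nu,\omega;a+}^{\gamma+\sigma;\psi}f(x)$ as well, and the right-sided identities follow verbatim with $\psi(t)-\psi(x)$ in place of $\psi(x)-\psi(t)$ and integration over $[x,b]$.

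The change of variable, the Dirichlet interchange, and the index bookkeeping are all routine; the one genuine ingredient is the Chu--Vandermonde identity $\sum_{k=0}^{m}\binom{m}{k}(\gamma)_k(\sigma)_{m-k}=(\gamma+\sigma)_m$, which is exactly what forces the double sum to recombine into a single three-parameters Mittag-Leffler function and thereby makes both the order $\alpha$ and the Prabhakar parameter $\gamma$ add under composition. The only step that deserves an explicit word is the justification of exchanging the summations with the integrals; as in the earlier proofs this rests on the uniform convergence of the Mittag-Leffler series, so I expect no real difficulty there, and that is the point I would be most careful to state cleanly.
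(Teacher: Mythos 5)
Your proposal is correct and follows essentially the same route as the paper's own proof: expand both operators via \textbf{Definition \ref{op-ML}}, interchange the integrations (Dirichlet), substitute $\tau=(\psi(t)-\psi(u))/(\psi(x)-\psi(u))$ to produce the beta integral, and recombine the resulting double series via the Vandermonde identity $\sum_{m=0}^{k}\binom{k}{m}(\gamma)_{k-m}(\sigma)_{m}=(\gamma+\sigma)_{k}$ into $E_{\rho,\alpha+\nu}^{\gamma+\sigma}$. The only cosmetic difference is that you regroup the double sum by $k+j=m$ directly, whereas the paper performs the equivalent index shift $k\rightarrow k-m$; the substance is identical.
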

\begin{proof}
Considering the \textbf{Definition \ref{op-ML}}, we have
\begin{eqnarray*}
\mathds{E}_{\rho,\alpha,\omega;a+}^{\gamma;\psi}\mathds{E}_{\rho,\nu,\omega;a+}^{\sigma;\psi}f(x)&=&
\int_{a}^{x}\psi'(t)(\psi(x)-\psi(t))^{\alpha-1}E_{\rho,\alpha}^{\gamma}[\omega(\psi(x)-\psi(t))^{\rho}]\\
&\times &
\left[\int_{a}^{t}\psi'(u)(\psi(t)-\psi(u))^{\nu-1}E_{\rho,\nu}^{\sigma}[\omega(\psi(t)-\psi(u))^{\rho}]f(u)\,{\rm d}u\right]
{\rm d}t.
\end{eqnarray*}
Interchanging the order of integration, we can write
\begin{eqnarray*}
\mathds{E}_{\rho,\alpha,\omega;a+}^{\gamma;\psi}\mathds{E}_{\rho,\nu,\omega;a+}^{\sigma;\psi}f(x)&=&
\int_{a}^{x}\psi'(u)f(u)\,{\rm d}u\int_{u}^{x}\psi'(t)(\psi(x)-\psi(t))^{\alpha-1}
E_{\rho,\alpha}^{\gamma}[\omega(\psi(x)-\psi(t))^{\rho}]\\
&\times &(\psi(t)-\psi(u))^{\nu-1}
E_{\rho,\nu}^{\sigma}[\omega(\psi(t)-\psi(u))^{\rho}]{\rm d}t.
\end{eqnarray*}
Taking the same variable change as \textbf{Theorem \ref{th-3}}, we obtain
\begin{eqnarray*}
\mathds{E}_{\rho,\alpha,\omega;a+}^{\gamma;\psi}\mathds{E}_{\rho,\nu,\omega;a+}^{\sigma;\psi}f(x)&=&
\int_{a}^{x}\psi'(u)f(u)(\psi(x)-\psi(u))^{\alpha+\nu-1}E_{\rho,\alpha}^{\gamma}[\omega(\psi(x)-\psi(u))^{\rho}]
E_{\rho,\nu}^{\sigma}[\omega(\psi(x)-\psi(u))^{\rho}]\\
&\times &\left[\int_{0}^{1}(1-\tau)^{\rho{k}+\alpha-1}\tau^{\rho{m}+\nu-1}{\rm d}
\tau\right]{\rm d}u.
\end{eqnarray*}
From \textbf{Definition \ref{beta}}, we find
\begin{eqnarray*}
\mathds{E}_{\rho,\alpha,\omega;a+}^{\gamma;\psi}\mathds{E}_{\rho,\nu,\omega;a+}^{\sigma;\psi}f(x)=
\int_{a}^{x}\psi'(u)f(u)\,(\psi(x)-\psi(u))^{\alpha+\nu-1}
\sum_{k=0}^{\infty}\sum_{m=0}^{\infty}\frac{\omega^{k+m}(\gamma)_k(\sigma)_m\,(\psi(x)-\psi(u))^{\rho(k+m)}}
{k!\,m!\,{\Gamma(\rho(k+m)+\alpha+\nu)}}\,{\rm d}u.
\end{eqnarray*}
Let $k\rightarrow{k-m}$, then
\begin{eqnarray*}
\mathds{E}_{\rho,\alpha,\omega;a+}^{\gamma;\psi}\mathds{E}_{\rho,\nu,\omega;a+}^{\sigma;\psi}f(x)&=&
\int_{a}^{x}\psi'(u)f(u)\,(\psi(x)-\psi(u))^{\alpha+\nu-1}\sum_{k=m}^{\infty}\sum_{m=0}^{\infty}
\frac{\omega^{k}(\gamma)_{k-m}(\sigma)_m\,(\psi(x)-\psi(u))^{\rho{k}}}{(k-m)!\,m!\,\Gamma(\rho{k}+\alpha+\nu)}\,{\rm d}u\\
&=&\int_{a}^{x}\psi'(u)f(u)\,(\psi(x)-\psi(u))^{\alpha+\nu-1}\sum_{k=0}^{\infty}\sum_{m=0}^{k}{{k}\choose{m}}
\frac{\omega^{k}(\gamma)_{k-m}(\sigma)_m\,(\psi(x)-\psi(u))^{\rho{k}}}{\Gamma(\rho{k}+\alpha+\nu)k!}\,{\rm d}u.
\end{eqnarray*}
Using the relation
\begin{eqnarray*}
(\gamma+\sigma)_{k}=\sum_{m=0}^{k}{{k}\choose{m}}(\gamma)_{k-m}(\sigma)^{m},
\end{eqnarray*}
we have
\begin{eqnarray*}
\mathds{E}_{\rho,\alpha,\omega;a+}^{\gamma;\psi}\mathds{E}_{\rho,\nu,\omega;a+}^{\sigma;\psi}f(x)&=&
\int_{a}^{x}\psi'(u)f(u)\,(\psi(x)-\psi(u))^{\alpha+\nu-1}\sum_{k=0}^{\infty}
\frac{(\gamma+\sigma)_{k}}{\Gamma(\rho{k}+\alpha+\nu)}\frac{[\omega(\psi(x)-\psi(u))^{\rho}]^{k}}{k!}\,{\rm d}u\\
&=&\int_{a}^{x}\psi'(u)f(u)\,(\psi(x)-\psi(u))^{\alpha+\nu-1}
E_{\rho,\alpha+\nu}^{\gamma+\sigma}[\omega(\psi(x)-\psi(u))^{\rho}]\,{\rm d}u\\
&=&\mathds{E}_{\rho,\alpha+\nu,\omega;a+}^{\gamma+\sigma;\psi}f(x).
\end{eqnarray*}
The proof of $\mathds{E}_{\rho,\nu,\omega;a+}^{\sigma;\psi}\mathds{E}_{\rho,\alpha,\omega;a+}^{\gamma;\psi}f(x)=
\mathds{E}_{\rho,\alpha+\nu,\omega;a+}^{\gamma+\sigma;\psi}f(x)$ goes along similar lines.
\end{proof}
\section{Cauchy problem}
\label{Sec:4}

In this section, we convert the initial value problem for the differential equation (Cauchy problem)
into an equivalent Volterra integral equation. We obtain the solution of the Cauchy 
problem using the successive approximations.
\begin{theorem}\textnormal{\cite{Almeida2}}
\label{th-5}
Consider the initial value problem:
\begin{eqnarray}
\begin{dcases}
{^{\rm{C}}\mathds{D}_{a+}^{\beta,\psi}}u(x)=f(x,u(x)), \quad\quad x\in[a,b], \label{C1}\\
\left(\frac{1}{\psi'(x)}\frac{{\rm d}}{{\rm d}x}\right)^{i}u(x)\biggl|_{x=a}=b_i, \quad i=0,1,\cdots,n-1,
\end{dcases}
\end{eqnarray}
where
\begin{enumerate}
\item $0<\beta\notin\mathbb{N}$ and $n=[\beta]+1$,
\item $b_i$, $i=0,1,\ldots,n-1$, are fixed reals,
\item $u\in{C^{n-1}[a,b]}$ such that ${^{\rm{C}}\mathds{D}_{a+}^{\beta,\psi}}u$ exists and is
continuous in $[a,b]$,
\item $f : [a,b]\times\mathbb{R}\rightarrow\mathbb{R}$ is continuous.
\end{enumerate}
The Cauchy problem \textnormal{(\ref{C1})} is equivalent to the following Volterra integral
equation
\begin{eqnarray}
u(x)=\sum_{i=0}^{n-1}{b_i}\frac{(\psi(x)-\psi(a))^{i}}{i!}+\mathds{I}_{a+}^{\beta;\psi}f(x,u(x)). \label{sol-1}
\end{eqnarray}
If $f$ is Lipschitz continuous with respect to the second variable, then exists a unique solution to problem 
\textnormal{(\ref{C1})} on interval $[a,a+h]\subseteq [a,b]$.
\end{theorem}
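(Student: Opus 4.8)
The plan is to prove that the Cauchy problem \textnormal{(\ref{C1})} and the integral equation \textnormal{(\ref{sol-1})} are equivalent and then to derive existence and uniqueness from a contraction argument applied to \textnormal{(\ref{sol-1})}. For the implication \textnormal{(\ref{C1})}$\Rightarrow$\textnormal{(\ref{sol-1})}, I would first observe that, since $f$ is continuous and $u\in C^{n-1}[a,b]$, the map $x\mapsto f(x,u(x))$ is continuous on $[a,b]$, so that $\mathds{I}_{a+}^{\beta;\psi}f(\cdot,u(\cdot))$ is well defined. Applying $\mathds{I}_{a+}^{\beta;\psi}$ to both sides of ${^{\rm C}\mathds{D}_{a+}^{\beta;\psi}}u(x)=f(x,u(x))$ and using the second item of \textbf{Property 1} with $\alpha=\beta$ gives
\begin{eqnarray*}
u(x)-\sum_{k=0}^{n-1}u^{[k]}_{\psi}(a)\,\frac{(\psi(x)-\psi(a))^{k}}{k!}=\mathds{I}_{a+}^{\beta;\psi}f(x,u(x));
\end{eqnarray*}
substituting the initial data $u^{[k]}_{\psi}(a)=b_k$ then yields \textnormal{(\ref{sol-1})}.

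For the converse, suppose $u\in C^{n-1}[a,b]$ solves \textnormal{(\ref{sol-1})}. Applying ${^{\rm C}\mathds{D}_{a+}^{\beta;\psi}}$ to both sides, each monomial $(\psi(x)-\psi(a))^{k}$ with $0\le k\le n-1$ has $f^{[n]}_{\psi}\equiv 0$ and hence vanishing $\psi$-Caputo derivative of order $\beta$, while the left-inverse identity ${^{\rm C}\mathds{D}_{a+}^{\beta;\psi}}\mathds{I}_{a+}^{\beta;\psi}g=g$ for continuous $g$ (cf.\ \cite{Almeida}), applied to $g(x)=f(x,u(x))$, returns the differential equation of \textnormal{(\ref{C1})}. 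To recover the $n$ initial conditions I would apply $\left(\frac{1}{\psi'(x)}\frac{\rm d}{{\rm d}x}\right)^{i}$, $i=0,\dots,n-1$, to \textnormal{(\ref{sol-1})} and let $x\to a^{+}$: the polynomial part reproduces $b_i$ exactly, while $\mathds{I}_{a+}^{\beta;\psi}f(\cdot,u(\cdot))$ is bounded near $a$ by a constant times $(\psi(x)-\psi(a))^{\beta}$ (the first item of \textbf{Property 1} with $\delta=1$), so its $i$-th $\psi$-derivative is of order $(\psi(x)-\psi(a))^{\beta-i}$ with $\beta-i\ge\beta-(n-1)>0$ and therefore vanishes at $x=a$.

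For the last assertion, assume $f$ is Lipschitz in the second variable with constant $L$ and work on $C[a,a+h]$ with the operator $(Tu)(x)=\sum_{i=0}^{n-1}b_i(\psi(x)-\psi(a))^{i}/i!+\mathds{I}_{a+}^{\beta;\psi}f(x,u(x))$. Arguing as in the proof of \textbf{Theorem \ref{th-2}} with $\gamma=0$ gives $|\mathds{I}_{a+}^{\beta;\psi}g(x)|\le(\psi(x)-\psi(a))^{\beta}\,{\lVert g\rVert}_{C[a,x]}/\Gamma(\beta+1)$, so the Lipschitz hypothesis yields
\begin{eqnarray*}
{\lVert Tu-Tv\rVert}_{C[a,a+h]}\le\frac{L\,(\psi(a+h)-\psi(a))^{\beta}}{\Gamma(\beta+1)}\,{\lVert u-v\rVert}_{C[a,a+h]}.
\end{eqnarray*}
Choosing $h>0$ with $L(\psi(a+h)-\psi(a))^{\beta}/\Gamma(\beta+1)<1$ and invoking the Banach fixed-point theorem (equivalently, showing that the successive approximations $u_{m+1}=Tu_m$ converge uniformly) produces a unique fixed point, which by the two equivalence steps is the unique solution of \textnormal{(\ref{C1})} on $[a,a+h]$; the regularity $u\in C^{n-1}$ with continuous ${^{\rm C}\mathds{D}_{a+}^{\beta;\psi}}u$ comes for free from \textnormal{(\ref{sol-1})} because $\beta>n-1$.

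The step I expect to be the main obstacle is the converse implication — precisely, rigorously justifying that ${^{\rm C}\mathds{D}_{a+}^{\beta;\psi}}$ acts as a genuine left inverse of $\mathds{I}_{a+}^{\beta;\psi}$ on $f(\cdot,u(\cdot))$ and that the differentiated fractional integral vanishes at the endpoint; this rests on a careful description of the endpoint behaviour of $\mathds{I}_{a+}^{\beta;\psi}f(\cdot,u(\cdot))$ (it belongs to $C^{n-1}$ and its first $n-1$ $\psi$-derivatives vanish at $a$), rather than on any lengthy computation. A complete argument along these lines is given in \cite{Almeida2}.
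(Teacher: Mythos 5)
The paper itself gives no proof of this theorem: it is quoted from \cite{Almeida2}, so there is no in-text argument to compare against. Your outline — apply $\mathds{I}_{a+}^{\beta;\psi}$ together with Property 1(2) for the direct implication, the left-inverse identity ${^{\rm C}\mathds{D}_{a+}^{\beta;\psi}}\mathds{I}_{a+}^{\beta;\psi}g=g$ plus the endpoint decay $\mathds{I}_{a+}^{\beta;\psi}f(\cdot,u(\cdot))=O\bigl((\psi(x)-\psi(a))^{\beta}\bigr)$ for the converse and the initial conditions, and a contraction on $C[a,a+h]$ for existence and uniqueness — is precisely the standard argument used in that reference and is essentially correct; the only point requiring care is that Property 1(2) is stated in this paper for $f\in C^{n}[a,b]$, while the theorem only assumes $u\in C^{n-1}[a,b]$ with continuous $\psi$-Caputo derivative, so a fully rigorous forward step needs the version of that composition formula valid under these weaker hypotheses, exactly as carried out in \cite{Almeida2}.
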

From \textbf{Theorem \ref{th-5}}, we have the following lemma and theorem as particular cases.
\begin{lemma}
\label{lemma-2}
Let $\alpha,\beta,\gamma,\rho,\lambda,\omega\in\mathbb{R}$ with $\alpha>0$, $\beta>0$ and $\rho>0$.
We consider the Cauchy problem with initial conditions:
\begin{eqnarray}
\begin{dcases}
{^{\rm{C}}\mathds{D}_{a+}^{\beta,\psi}}u(x)=\lambda\mathds{E}_{\rho,\alpha,\omega;a+}^{\gamma;\psi}u(x)+f(x),\label{Cauchy-1}\\
\left(\frac{1}{\psi'(x)}\frac{{\rm d}}{{\rm d}x}\right)^{i}u(x)\biggl|_{x=a}=b_i, \quad (b_i\in\mathbb{R};\,
i=0,1,\cdots,n-1). \\
\end{dcases}
\end{eqnarray}
We suppose that $f\in{C[a,b]},\,\,a\leq{x}\leq{b}$, then by \textnormal{\textbf{Theorem \ref{th-5}}}, 
the Cauchy problem \textnormal{(\ref{Cauchy-1})} is equivalent in the space $C^{n-1}[a,b]$
to the Volterra integral equation of the second kind
\begin{eqnarray}
u(x)&=&\sum_{i=0}^{\infty}{b_i}\frac{(\psi(x)-\psi(a))^{i}}{i!}+
\lambda\int_{a}^{x}\psi'(t)(\psi(x)-\psi(t))^{\alpha+\beta-1}E_{\rho,\alpha+\beta}^{\gamma}[\omega(\psi(x)-\psi(t))^{\rho}]
u(t){\rm d}t \nonumber\\
&+&\frac{1}{\Gamma(\beta)}\int_{a}^{x}\psi'(t)(\psi(x)-\psi(t))^{\beta-1}f(t){\rm d}t. \label{sol-Cauchy}
\end{eqnarray}
\end{lemma}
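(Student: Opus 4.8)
The plan is to invoke \textbf{Theorem \ref{th-5}} directly, identifying the right-hand side of the differential equation in (\ref{Cauchy-1}) with the function $f(x,u(x))$ appearing in (\ref{C1}). Concretely, put
$$F(x,u(x)) = \lambda\,\mathds{E}_{\rho,\alpha,\omega;a+}^{\gamma;\psi}u(x) + f(x).$$
First I would verify that $F$ meets the hypotheses of \textbf{Theorem \ref{th-5}}: since $f\in C[a,b]$ by assumption and, by \textbf{Theorem \ref{th-2}}, the operator $\mathds{E}_{\rho,\alpha,\omega;a+}^{\gamma;\psi}$ is bounded on $C_{\nu,\psi}[a,b]$ (in particular it maps continuous functions to continuous functions), the forcing term $x\mapsto F(x,u(x))$ is continuous whenever $u$ is, so the equation falls within the scope of \textbf{Theorem \ref{th-5}}. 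Hence the Cauchy problem (\ref{Cauchy-1}) is equivalent, in $C^{n-1}[a,b]$, to the Volterra equation obtained by specializing (\ref{sol-1}):
$$u(x) = \sum_{i=0}^{n-1} b_i\,\frac{(\psi(x)-\psi(a))^{i}}{i!} + \mathds{I}_{a+}^{\beta;\psi}\!\left[\lambda\,\mathds{E}_{\rho,\alpha,\omega;a+}^{\gamma;\psi}u(x) + f(x)\right].$$

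Next I would simplify the last term. By linearity of the $\psi$-Riemann--Liouville fractional integral,
$$\mathds{I}_{a+}^{\beta;\psi}\!\left[\lambda\,\mathds{E}_{\rho,\alpha,\omega;a+}^{\gamma;\psi}u(x) + f(x)\right] = \lambda\,\mathds{I}_{a+}^{\beta;\psi}\mathds{E}_{\rho,\alpha,\omega;a+}^{\gamma;\psi}u(x) + \mathds{I}_{a+}^{\beta;\psi}f(x).$$
For the first summand, \textbf{Theorem \ref{th-3}} gives $\mathds{I}_{a+}^{\beta;\psi}\mathds{E}_{\rho,\alpha,\omega;a+}^{\gamma;\psi}u(x) = \mathds{E}_{\rho,\alpha+\beta,\omega;a+}^{\gamma;\psi}u(x)$, and unwinding \textbf{Definition \ref{op-ML}} for this operator produces exactly $\int_a^x \psi'(t)(\psi(x)-\psi(t))^{\alpha+\beta-1}E_{\rho,\alpha+\beta}^{\gamma}[\omega(\psi(x)-\psi(t))^{\rho}]\,u(t)\,{\rm d}t$. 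For the second summand, writing out \textbf{Definition \ref{psi-RL}} yields $\frac{1}{\Gamma(\beta)}\int_a^x \psi'(t)(\psi(x)-\psi(t))^{\beta-1}f(t)\,{\rm d}t$. Substituting both expressions back into the displayed equation gives precisely (\ref{sol-Cauchy}), the sum inheriting its range directly from (\ref{sol-1}).

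The converse implication — that a continuous solution of (\ref{sol-Cauchy}) solves (\ref{Cauchy-1}) — is already contained in the equivalence asserted by \textbf{Theorem \ref{th-5}} applied to the same $F$: one reads (\ref{sol-Cauchy}) backwards to (\ref{sol-1}) via the identities just used, applies ${^{\rm C}\mathds{D}_{a+}^{\beta,\psi}}$, and evaluates the derivatives $\left(\tfrac{1}{\psi'}\tfrac{\rm d}{{\rm d}x}\right)^{i}$ at $x=a$ using the composition rule for $\mathds{I}_{a+}^{\beta;\psi}{^{\rm C}\mathds{D}_{a+}^{\beta,\psi}}$ (the second item of the Property stated above) to recover both the equation and the initial data. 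The only genuinely delicate point is the very first step: $F(x,u(x))$ is not pointwise in $u$ but depends on its values on all of $[a,x]$, so one must observe that \textbf{Theorem \ref{th-5}} still applies to Volterra-type right-hand sides of this form — its argument uses only continuity of $x\mapsto F(x,u(x))$ for continuous $u$, which \textbf{Theorem \ref{th-2}} supplies — or equivalently treat $\lambda\,\mathds{E}_{\rho,\alpha,\omega;a+}^{\gamma;\psi}u(x)+f(x)$ as a continuous forcing term attached to each admissible $u$. I expect this to be the main obstacle to a fully rigorous write-up; the remaining steps are routine bookkeeping with the operator identities of Section \ref{Sec:3}.
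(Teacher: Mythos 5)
Your proposal follows essentially the same route as the paper's proof: substitute $f(x,u(x))=\lambda\mathds{E}_{\rho,\alpha,\omega;a+}^{\gamma;\psi}u(x)+f(x)$ into Eq.~(\ref{sol-1}) of \textbf{Theorem \ref{th-5}}, then rewrite $\lambda\mathds{I}_{a+}^{\beta;\psi}\mathds{E}_{\rho,\alpha,\omega;a+}^{\gamma;\psi}u(x)+\mathds{I}_{a+}^{\beta;\psi}f(x)$ explicitly via \textbf{Theorem \ref{th-3}} and \textbf{Definition \ref{psi-RL}}. If anything, you are more careful than the paper, which applies \textbf{Theorem \ref{th-5}} without comment even though the right-hand side depends on $u$ through a Volterra-type operator rather than pointwise --- the subtlety you correctly flag.
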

\begin{proof}
From \textnormal{Eq.(\ref{sol-1})} with $f(x,u(x))=\lambda\mathds{E}_{\rho,\alpha,\omega;a+}^{\gamma;\psi}u(x)+f(x)$, 
we have
\begin{eqnarray*}
u(x)=\sum_{i=0}^{n-1}{b_i}\frac{(\psi(x)-\psi(a))^{i}}{i!}+
\lambda\mathds{I}^{\beta;\psi}_{a+}\mathds{E}_{\rho,\alpha,\omega;a+}^{\gamma;\psi}u(x)+
\mathds{I}^{\beta;\psi}_{a+}f(x).
\end{eqnarray*}
According to \textbf{Definition \ref{psi-RL}} and \textbf{Theorem \ref{th-3}}, we obtain
\textnormal{Eq.(\ref{sol-Cauchy})}.
\end{proof}
\begin{theorem}
\label{th-6}
Let $\alpha,\beta,\gamma,\rho,\lambda,\omega\in\mathbb{R}$ with $\alpha>0$, $\beta>0$ and $\rho>0$.
The solution of \textnormal{Eq.(\ref{sol-Cauchy})} is given by
\begin{eqnarray}
u(x)&=&\sum_{i=0}^{n-1}{b_i}(\psi(x)-\psi(a))^{i}\sum_{j=0}^{\infty}\lambda^{j}(\psi(x)-\psi(a))^{j(\alpha+\beta)}
E_{\rho,j(\alpha+\beta)+i+1}^{j{\gamma}}[\omega(\psi(x)-\psi(a))^{\rho}]\nonumber\\
&+&\sum_{j=0}^{\infty}{\lambda^j}\mathds{E}_{\rho,j(\alpha+\beta)+\beta,\omega;a+}^{j\gamma;\psi}f(x). \label{sol}
\end{eqnarray}
\end{theorem}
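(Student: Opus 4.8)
The plan is to solve the Volterra integral equation (\ref{sol-Cauchy}) by the method of successive approximations, i.e. set $u_0(x)=\sum_{i=0}^{n-1}b_i\frac{(\psi(x)-\psi(a))^i}{i!}$ and define inductively
\begin{eqnarray*}
u_{m}(x)=u_0(x)+\lambda\,\mathds{E}_{\rho,\alpha+\beta,\omega;a+}^{\gamma;\psi}u_{m-1}(x)+\mathds{I}_{a+}^{\beta;\psi}f(x),
\end{eqnarray*}
recognizing that the first integral in (\ref{sol-Cauchy}) is precisely $\lambda\,\mathds{E}_{\rho,\alpha+\beta,\omega;a+}^{\gamma;\psi}u_{m-1}(x)$ by Definition \ref{op-ML}. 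The key engine is the semigroup property (Theorem \ref{th-4}): iterating the operator $\mathds{E}_{\rho,\alpha+\beta,\omega;a+}^{\gamma;\psi}$ exactly $j$ times produces $\mathds{E}_{\rho,j(\alpha+\beta),\omega;a+}^{j\gamma;\psi}$, and composing one further copy of $\mathds{I}_{a+}^{\beta;\psi}$ on the $f$-term uses Theorem \ref{th-3} to turn $\mathds{E}_{\rho,j(\alpha+\beta),\omega;a+}^{j\gamma;\psi}\mathds{I}_{a+}^{\beta;\psi}$ into $\mathds{E}_{\rho,j(\alpha+\beta)+\beta,\omega;a+}^{j\gamma;\psi}$. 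So after unwinding the recursion, $u_m$ is a finite partial sum of the series in (\ref{sol}), and the claimed solution is the formal limit $u=\lim_{m\to\infty}u_m=\sum_{j=0}^{\infty}\lambda^j\big[(\mathds{E}_{\rho,\alpha+\beta,\omega;a+}^{\gamma;\psi})^j u_0 + \mathds{E}_{\rho,j(\alpha+\beta)+\beta,\omega;a+}^{j\gamma;\psi}f\big]$.

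Next I would evaluate the two pieces explicitly. For the $u_0$-term, apply Lemma \ref{lemma1} with $\beta\rightsquigarrow i+1$ together with the semigroup law: $(\mathds{E}_{\rho,\alpha+\beta,\omega;a+}^{\gamma;\psi})^j$ applied to $(\psi(x)-\psi(a))^i$ gives a $\Gamma(i+1)/i!=1$ normalization and yields $(\psi(x)-\psi(a))^{i+j(\alpha+\beta)}E_{\rho,j(\alpha+\beta)+i+1}^{j\gamma}[\omega(\psi(x)-\psi(a))^\rho]$ — matching the first line of (\ref{sol}). Strictly one should either prove a one-shot power-function formula for $\mathds{E}_{\rho,\mu(\alpha+\beta),\omega;a+}^{\mu\gamma;\psi}$ by the same beta-integral computation as in Lemma \ref{lemma1}, or first combine the $j$ operators into a single one via Theorem \ref{th-4} and then apply Lemma \ref{lemma1} once; I would do the latter. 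The $f$-term is already in closed form as $\sum_j\lambda^j\mathds{E}_{\rho,j(\alpha+\beta)+\beta,\omega;a+}^{j\gamma;\psi}f(x)$, which is the second line of (\ref{sol}).

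The main obstacle is convergence: I must show the successive approximations converge in $C_{\nu,\psi}[a,b]$ (or in $C[a,b]$), so that the termwise operator manipulations and the passage to the limit are legitimate and the resulting series genuinely satisfies (\ref{sol-Cauchy}). Here I would invoke Theorem \ref{th-2}: each $\mathds{E}_{\rho,\mu,\omega;a+}^{\sigma;\psi}$ is bounded with an explicit constant, and more sharply, the norm of the $j$-fold composite is governed by $|(\psi(b)-\psi(a))^{j(\alpha+\beta)}E_{\rho,j(\alpha+\beta)+1}^{j\gamma}[\omega(\psi(b)-\psi(a))^\rho]|$, whose decay in $j$ can be controlled by the same $\Gamma$-ratio/ratio-test estimate used in the proof of Theorem \ref{th-2} (the denominator grows like $\Gamma(j(\alpha+\beta))$). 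This furnishes a Weierstrass $M$-test dominating both series, giving uniform convergence on $[a,b]$, hence the interchange of summation with the integral operators is valid and the limit function solves (\ref{sol-Cauchy}); by Lemma \ref{lemma-2} it solves the Cauchy problem (\ref{Cauchy-1}). Uniqueness, if wanted, follows from the Lipschitz clause of Theorem \ref{th-5} since $v\mapsto\lambda\mathds{E}_{\rho,\alpha,\omega;a+}^{\gamma;\psi}v+f$ is Lipschitz in its second argument by boundedness of $\mathds{E}_{\rho,\alpha,\omega;a+}^{\gamma;\psi}$.
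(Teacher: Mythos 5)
Your proposal follows essentially the same route as the paper's proof: successive approximations starting from the same $u_0$, with the recursion unwound via Theorems \ref{th-3} and \ref{th-4} into the operators $\mathds{E}_{\rho,j(\alpha+\beta),\omega;a+}^{j\gamma;\psi}$ and $\mathds{E}_{\rho,j(\alpha+\beta)+\beta,\omega;a+}^{j\gamma;\psi}$, and the $u_0$-term evaluated by Lemma \ref{lemma1} with $\beta\mapsto i+1$. The only difference is that you add a convergence justification (Theorem \ref{th-2} bounds plus a Weierstrass $M$-test) and a uniqueness remark, whereas the paper simply passes formally to the limit $m\rightarrow\infty$; this is a strengthening of the same argument, not a different method.
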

\begin{proof}
According to successive approximations method, we set
\begin{eqnarray}
u_0(x)=\sum_{i=0}^{\infty}{b_i}\frac{(\psi(x)-\psi(a))^{i}}{i!} \label{u_0}
\end{eqnarray}
and
\begin{eqnarray}
u_m(x)=u_0(x)+\lambda\mathds{I}^{\beta;\psi}_{a+}\mathds{E}_{\rho,\alpha,\omega;a+}^{\gamma;\psi}u_{m-1}(x)+
\mathds{I}^{\beta;\psi}_{a+}f(x),\quad\quad m\in\mathbb{N}. \label{u_m}
\end{eqnarray}
Using Eq.(\ref{u_m}) with $m=1,2,\ldots$, we find
\begin{eqnarray*}
u_1(x)&=&u_0(x)+\lambda\mathds{I}^{\beta;\psi}_{a+}\mathds{E}_{\rho,\alpha,\omega;a+}^{\gamma;\psi}u_{0}(x)+
\mathds{I}^{\beta;\psi}_{a+}f(x)\\
u_2(x)&=&u_0(x)+\lambda\mathds{I}^{\beta;\psi}_{a+}\mathds{E}_{\rho,\alpha,\omega;a+}^{\gamma;\psi}u_{1}(x)+
\mathds{I}^{\beta;\psi}_{a+}f(x)\\
&=&u_0(x)+\lambda\mathds{I}^{\beta;\psi}_{a+}\mathds{E}_{\rho,\alpha,\omega;a+}^{\gamma;\psi}[u_0(x)+
\lambda\mathds{I}^{\beta;\psi}_{a+}\mathds{E}_{\rho,\alpha,\omega;a+}^{\gamma;\psi}u_{0}(x)+\mathds{I}^{\beta;\psi}_{a+}f(x)]
+\mathds{I}^{\beta;\psi}_{a+}f(x)\\
&=&u_0(x)+\lambda\mathds{E}_{\rho,\alpha+\beta,\omega;a+}^{\gamma;\psi}u_0(x)+
{\lambda^2}\mathds{E}_{\rho,2(\alpha+\beta),\omega;a+}^{2\gamma;\psi}u_0(x)+
\lambda\mathds{E}_{\rho,(\alpha+\beta)+\beta,\omega;a+}^{\gamma;\psi}f(x)+
\mathds{I}_{a+}^{\beta;\psi}f(x)\\
u_3(x)&=&u_0(x)+\lambda\mathds{I}^{\beta;\psi}_{a+}\mathds{E}_{\rho,\alpha,\omega;a+}^{\gamma;\psi}u_{2}(x)+
\mathds{I}^{\beta;\psi}_{a+}f(x)\\
&=&u_0(x)+\lambda\mathds{I}^{\beta;\psi}_{a+}\mathds{E}_{\rho,\alpha,\omega;a+}^{\gamma;\psi}[u_0(x)+
\lambda\mathds{E}_{\rho,\alpha+\beta,\omega;a+}^{\gamma;\psi}u_0(x)+
{\lambda^2}\mathds{E}_{\rho,2(\alpha+\beta),\omega;a+}^{2\gamma;\psi}u_0(x)\\
&+&
\lambda\mathds{E}_{\rho,(\alpha+\beta)+\beta,\omega;a+}^{\gamma;\psi}f(x)+
\mathds{I}_{a+}^{\beta;\psi}f(x)]+\mathds{I}^{\beta;\psi}_{a+}f(x)\\
&=&u_0(x)+\lambda\mathds{E}_{\rho,\alpha+\beta,\omega;a+}^{\gamma;\psi}u_0(x)+
{\lambda^2}\mathds{E}_{\rho,2(\alpha+\beta),\omega;a+}^{2\gamma;\psi}u_0(x)+
{\lambda^3}\mathds{E}_{\rho,3(\alpha+\beta),\omega;a+}^{3\gamma;\psi}u_0(x)\\
&+&
\lambda\mathds{E}_{\rho,(\alpha+\beta)+\beta,\omega;a+}^{\gamma;\psi}f(x)
+{\lambda^2}\mathds{E}_{\rho,2(\alpha+\beta)+\beta;\omega;a+}^{\gamma;\psi}f(x)+
\mathds{I}_{a+}^{\beta;\psi}f(x)\\
&\vdots&\\
u_m(x)&=&u_0(x)+\sum_{j=1}^{m}{\lambda^j}\mathds{E}_{\rho,j(\alpha+\beta),\omega;a+}^{j\gamma,\psi}u_0(x)
+\sum_{j=1}^{m-1}{\lambda^j}\mathds{E}_{\rho,j(\alpha+\beta)+\beta,\omega;a+}^{j\gamma;\psi}f(x)+
\mathds{I}_{a+}^{\beta;\psi}f(x).
\end{eqnarray*}
From Eq.(\ref{u_0}), \textbf{Lemma \ref{lemma1}} and the particular case of \textbf{Definition \ref{op-ML}},
we have
\begin{eqnarray*}
u_m(x)=\sum_{i=0}^{n-1}{b_i}(\psi(x)-\psi(a))^{i}\sum_{j=0}^{m}{\lambda^j}
{E}_{\rho,j(\alpha+\beta)+i+1}^{j\gamma}[\omega(\psi(x)-\psi(a))^{\rho}]+
\sum_{j=0}^{m-1}{\lambda^j}\mathds{E}_{\rho,j(\alpha+\beta)+\beta,\omega;a+}^{j\gamma;\psi}f(x).
\end{eqnarray*}
\end{proof}
Taking $m\rightarrow\infty$ follows Eq.(\ref{sol}).
\subsection{Particular case}
\label{sub:4}

To conclude this section, we consider the following particular case of the Cauchy problem, Eq.(\ref{Cauchy-1}),
taking $f(x)=\xi(\psi(x)-\psi(a))^{\mu-1}E_{\rho,\mu}^{\sigma}[\omega(\psi(x)-\psi(a))^{\rho}]$.
\begin{theorem}
\label{th-7}
Let $\alpha,\beta,\gamma,\mu,\rho,\sigma,\xi,\lambda,\omega\in\mathbb{R}$ with $\alpha>0$, $\beta>0$, $\rho>0$ and
$\mu>0$. Thus, the Cauchy problem
\begin{eqnarray}
\begin{dcases}
{^{\rm{C}}\mathds{D}_{a+}^{\beta,\psi}}u(x)=\lambda\mathds{E}_{\rho,\alpha,\omega;a+}^{\gamma;\psi}u(x)+
\xi\,(\psi(x)-\psi(a))^{\mu-1}E_{\rho,\mu}^{\sigma}[\omega(\psi(x)-\psi(a))^{\rho}],\label{Cauchy-2}\\
\left(\frac{1}{\psi'(x)}\frac{{\rm d}}{{\rm d}x}\right)^{i}u(x)\biggl|_{x=a}=b_i, \quad (b_i\in\mathbb{R};\,
i=0,1,\cdots,n-1) \\
\end{dcases}
\end{eqnarray}
admits a unique solution $u(x)\in{C}^{n-1}[a,b]$, given by
\begin{eqnarray}
u(x)&=&\sum_{i=0}^{n-1}{b_i}(\psi(x)-\psi(a))^{i}\sum_{j=0}^{\infty}\lambda^{j}(\psi(x)-\psi(a))^{j(\alpha+\beta)}
E_{\rho,j(\alpha+\beta)+i+1}^{j{\gamma}}[\omega(\psi(x)-\psi(a))^{\rho}]\label{sol-pc}\\
&+&\xi(\psi(x)-\psi(a))^{\beta+\mu-1}\sum_{j=0}^{\infty}{\lambda^j}(\psi(x)-\psi(a))^{j(\alpha+\beta)}
{E}_{\rho,j(\alpha+\beta)+\beta+\mu}^{j\gamma+\sigma}[\omega(\psi(x)-\psi(a))^{\rho}]. \nonumber
\end{eqnarray}
\end{theorem}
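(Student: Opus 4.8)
The plan is to apply \textbf{Theorem \ref{th-6}} directly, substituting the specific forcing term $f(x)=\xi(\psi(x)-\psi(a))^{\mu-1}E_{\rho,\mu}^{\sigma}[\omega(\psi(x)-\psi(a))^{\rho}]$ and evaluating the operator action $\mathds{E}_{\rho,j(\alpha+\beta)+\beta,\omega;a+}^{j\gamma;\psi}f(x)$ in closed form. First I would note that the hypotheses of \textbf{Theorem \ref{th-5}} are met: the right-hand side of Eq.(\ref{Cauchy-2}) is continuous in $x$ and Lipschitz (indeed linear with bounded coefficient, by \textbf{Theorem \ref{th-2}}) in $u$, so a unique solution $u\in C^{n-1}[a,b]$ exists, and \textbf{Lemma \ref{lemma-2}} puts the problem in the Volterra form Eq.(\ref{sol-Cauchy}). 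Hence \textbf{Theorem \ref{th-6}} applies and gives
\begin{eqnarray*}
u(x)=\sum_{i=0}^{n-1}{b_i}(\psi(x)-\psi(a))^{i}\sum_{j=0}^{\infty}\lambda^{j}(\psi(x)-\psi(a))^{j(\alpha+\beta)}
E_{\rho,j(\alpha+\beta)+i+1}^{j{\gamma}}[\omega(\psi(x)-\psi(a))^{\rho}]
+\sum_{j=0}^{\infty}{\lambda^j}\mathds{E}_{\rho,j(\alpha+\beta)+\beta,\omega;a+}^{j\gamma;\psi}f(x).
\end{eqnarray*}
The first sum already matches the first line of Eq.(\ref{sol-pc}), so all the work is in the second sum.

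The key step is to compute $\mathds{E}_{\rho,j(\alpha+\beta)+\beta,\omega;a+}^{j\gamma;\psi}\big[(\psi(x)-\psi(a))^{\mu-1}E_{\rho,\mu}^{\sigma}[\omega(\psi(x)-\psi(a))^{\rho}]\big]$. I would first rewrite $f$ using \textbf{Property 1} (the power-function rule, viewing $E_{\rho,\mu}^{\sigma}$ as a power series) — more cleanly, I would observe that $(\psi(x)-\psi(a))^{\mu-1}E_{\rho,\mu}^{\sigma}[\omega(\psi(x)-\psi(a))^{\rho}]=\mathds{E}_{\rho,\mu,\omega;a+}^{\sigma;\psi}[\,\cdot\,]$ acting on a suitable constant; in fact by \textbf{Lemma \ref{lemma1}} with $\beta=1$ we have $\mathds{E}_{\rho,\mu,\omega;a+}^{\sigma;\psi}[1]=(\psi(x)-\psi(a))^{\mu}E_{\rho,\mu+1}^{\sigma}[\omega(\psi(x)-\psi(a))^{\rho}]$, which is not quite $f$; so instead I would apply \textbf{Lemma \ref{lemma1}} directly: termwise, $\mathds{E}_{\rho,\nu,\omega;a+}^{\delta;\psi}[(\psi(x)-\psi(a))^{\mu-1}]=\Gamma(\mu)(\psi(x)-\psi(a))^{\nu+\mu-1}E_{\rho,\nu+\mu}^{\delta}[\omega(\psi(x)-\psi(a))^{\rho}]$. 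Expanding $E_{\rho,\mu}^{\sigma}$ as a series $\sum_{\ell}\frac{(\sigma)_\ell}{\Gamma(\rho\ell+\mu)}\frac{\omega^\ell}{\ell!}(\psi(x)-\psi(a))^{\rho\ell}$ and applying \textbf{Lemma \ref{lemma1}} to each power $(\psi(x)-\psi(a))^{\rho\ell+\mu-1}$ with shift $\nu=j(\alpha+\beta)+\beta$ and parameter $\delta=j\gamma$, then resumming, should collapse to $\xi(\psi(x)-\psi(a))^{j(\alpha+\beta)+\beta+\mu-1}E_{\rho,j(\alpha+\beta)+\beta+\mu}^{j\gamma+\sigma}[\omega(\psi(x)-\psi(a))^{\rho}]$ — the same binomial/Vandermonde identity $(\gamma+\sigma)_k=\sum_{m=0}^{k}\binom{k}{m}(\gamma)_{k-m}(\sigma)_m$ used in \textbf{Theorem \ref{th-4}} is what produces the combined parameter $j\gamma+\sigma$. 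Alternatively, and more economically, I would invoke \textbf{Theorem \ref{th-4}} (semigroup property): writing $f(x)=\xi\,\mathds{E}_{\rho,\mu,\omega;a+}^{\sigma;\psi}g(x)$ where $g$ is chosen so that this holds — but the cleanest route is simply the direct termwise computation via \textbf{Lemma \ref{lemma1}}, which I expect to be routine.

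Summing over $j$ then yields $\sum_{j=0}^{\infty}\lambda^j\mathds{E}_{\rho,j(\alpha+\beta)+\beta,\omega;a+}^{j\gamma;\psi}f(x)=\xi(\psi(x)-\psi(a))^{\beta+\mu-1}\sum_{j=0}^{\infty}\lambda^j(\psi(x)-\psi(a))^{j(\alpha+\beta)}E_{\rho,j(\alpha+\beta)+\beta+\mu}^{j\gamma+\sigma}[\omega(\psi(x)-\psi(a))^{\rho}]$, which is exactly the second line of Eq.(\ref{sol-pc}); combined with the first sum from \textbf{Theorem \ref{th-6}}, this gives the claimed formula. The main obstacle — really the only nontrivial point — is the bookkeeping in the closed-form evaluation of $\mathds{E}_{\rho,j(\alpha+\beta)+\beta,\omega;a+}^{j\gamma;\psi}f(x)$: one must handle the double series (the series defining the Mittag-Leffler function inside the operator, and the series inside $f$), justify interchanging summation and integration (legitimate by uniform convergence of the Mittag-Leffler series on the compact interval, exactly as in \textbf{Lemma \ref{lemma1}}), and recognize the resulting coefficient as $(j\gamma+\sigma)_k/\Gamma(\rho k+j(\alpha+\beta)+\beta+\mu)$ via the Vandermonde-type Pochhammer identity. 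A secondary, purely formal point is to confirm convergence of the outer $j$-series, which follows from the ratio-test estimate already carried out in the proof of \textbf{Theorem \ref{th-2}}, applied with the shifted second index. Uniqueness and the regularity class $u\in C^{n-1}[a,b]$ are inherited verbatim from \textbf{Theorem \ref{th-5}}.
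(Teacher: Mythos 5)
Your proposal is correct and follows essentially the same route as the paper: substitute the specific forcing term into the solution formula of Theorem \ref{th-6} (obtained through Lemma \ref{lemma-2}) and then evaluate $\mathds{E}_{\rho,j(\alpha+\beta)+\beta,\omega;a+}^{j\gamma;\psi}f(x)$ in closed form. Your explicit termwise application of Lemma \ref{lemma1} followed by the Pochhammer--Vandermonde resummation is exactly the computation the paper compresses into the remark that the term $(\star)$ can be proved directly by the argument of Theorem \ref{th-3} (with the convolution step as in the proof of Theorem \ref{th-4}), so you have merely spelled out what the paper leaves implicit.
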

\begin{proof}
Using \textnormal{\textbf{Lemma \ref{lemma-2}}} with $f(x)=\xi(\psi(x)-\psi(a))^{\mu-1}
E_{\rho,\mu}^{\sigma}[\omega(\psi(x)-\psi(a))^{\rho}]$ and the linearity property,\textnormal{\textbf{Theorem \ref{th-1}}}, 
we have
\begin{eqnarray*}
u(x)&=&\sum_{i=0}^{n-1}{b_i}(\psi(x)-\psi(a))^{i}\sum_{j=0}^{\infty}\lambda^{j}(\psi(x)-\psi(a))^{j(\alpha+\beta)}
E_{\rho,j(\alpha+\beta)+i+1}^{j{\gamma}}[\omega(\psi(x)-\psi(a))^{\rho}]\\
&+&\xi\sum_{j=0}^{\infty}\underbrace{\mathds{E}_{j(\alpha+\beta)+\beta,\omega;a+}^{j\gamma;\psi}
\left\{(\psi(x)-\psi(a))^{\mu-1}E_{\rho,\mu}^{\sigma}[\omega(\psi(x)-\psi(a))^{\rho}]\right\}}_{(\star)}.
\end{eqnarray*}
Eq.($\star$) can be proved directly by using the proof of \textnormal{\textbf{Theorem \ref{th-3}}}, which
yields Eq.(\ref{sol-pc}). If $\psi(x)=x$, we recover the result presented in \cite{Kilbas2002}.
\end{proof}
\section{The inverse operator}
\label{Sec:5}

In this section, we construct the left inverse operator $\mathbf{D}_{a+}^{\gamma;\psi}$ of 
the operator $\mathds{E}_{\rho,\mu,\omega;a+}^{\gamma;\psi}$.

\begin{definition}

Let $\alpha,\beta,\gamma,\mu,\rho,\omega\in\mathbb{R}$ with $\alpha>0$ and $\rho>0$. We define the left
inverse operators $\mathbf{D}_{a+}^{\gamma;\psi}$ and $\mathbf{D}_{b-}^{\gamma;\psi}$ of the operators
$\mathds{E}_{\rho,\mu,\omega;a+}^{\gamma;\psi}$ and $\mathds{E}_{\rho,\mu,\omega;b-}^{\gamma;\psi}$, respectively, 
as follows:
\begin{eqnarray*}
\mathbf{D}_{a+}^{\gamma;\psi}f(x)={^{\rm{C}}{\mathds{D}_{a+}^{\alpha;\psi}}}
\mathds{E}_{\rho,\alpha-\mu,\omega;a+}^{-\gamma;\psi}f(x)
\end{eqnarray*}
and
\begin{eqnarray*}
\mathbf{D}_{b-}^{\gamma;\psi}f(x)={^{\rm{C}}{\mathds{D}_{b-}^{\alpha;\psi}}}
\mathds{E}_{\rho,\alpha-\mu,\omega;b-}^{-\gamma;\psi}f(x).
\end{eqnarray*}
\end{definition}
In fact, we have
\begin{eqnarray*}
\mathbf{D}_{a+}^{\gamma;\psi}\mathds{E}_{\rho,\mu,\omega;a+}^{\gamma;\psi}f(x)=
{^{\rm{C}}\mathds{D}_{a+}^{\alpha;\psi}}\mathds{E}_{\rho,\alpha-\mu,\omega;a+}^{-\gamma;\psi}
\mathds{E}_{\rho,\mu,\omega;a+}^{\gamma;\psi}f(x)=
{^{\rm{C}}\mathds{D}_{a+}^{\alpha;\psi}}\mathds{E}_{\rho,\alpha,\omega;a+}^{0;\psi}f(x)=
{^{\rm{C}}\mathds{D}_{a+}^{\alpha;\psi}}\mathds{I}_{a+}^{\alpha;\psi}f(x)=f(x).
\end{eqnarray*}
\section{Concluding remarks}

In this work, we proved some properties associated with a fractional integral operator
involving the three-parameters Mittag-Leffler function in the kernel with respect to another function.
We proved that a Cauchy problem is equivalent to the Volterra integral equation of the second kind, 
established the solution in its closed-form for that, we used the method of successive approximations.
Finally, the inverse operators of the $\mathds{E}_{\rho,\mu,\omega;a+}^{\gamma;\psi}$
and $\mathds{E}_{\rho,\mu,\omega;b-}^{\gamma;\psi}$ were defined.

\section*{Acknowledgment}
The author is grateful to prof. E. Capelas de Oliveira for useful and fruitful discussions.



\end{document}